\newtheorem{theorem}{Theorem}
\newtheorem{proposition}{Proposition}
\newtheorem{corollary}{Corollary}
\newtheorem{lemma}{Lemma}
\theoremstyle{remark}
\newtheorem{remark}{Remark}
\newtheorem{example}{Example}
\newtheorem{assumption}{Assumption}
\numberwithin{equation}{section}
\newcommand{\tr}{\top}
\newcommand{\ee}{\mathbb E}
\newcommand{\pp}{\mathbb P}
\newcommand{\nn}{\mathbb N}
\newcommand{\rr}{\mathbb R}
\newcommand{\AAA}{\mathcal A}
\newcommand{\BB}{\mathcal B}
\newcommand{\LL}{\mathcal L}
\newcommand{\TT}{\mathcal T}
\newcommand{\PP}{\mathcal P}
\newcommand{\OOO}{\mathscr O}
\newcommand{\FFF}{\mathscr F}
\newcommand{\<}{\langle}
\renewcommand{\>}{\rangle}
\allowdisplaybreaks \allowdisplaybreaks[4]
\newcommand{\dd}{\mathrm{d}}
\newcommand{\abs}[1]{\left\lvert #1 \right\rvert}
\newcommand{\norm}[1]{\left\lVert #1 \right\rVert}
\newcommand{\innerproduct}[2]{\left\langle #1,#2 \right\rangle}
\begin{document}

\title[Unique Ergodicity of STM for Monotone SDEs]
{Unique Ergodicity of Stochastic Theta Method for Monotone SDEs driven by Nondegenerate Multiplicative Noise} 
%    Information for the first author
\author{Zhihui LIU}
\address{Department of Mathematics \& National Center for Applied Mathematics Shenzhen (NCAMS), Southern University of Science and Technology, Shenzhen 518055, P.R. China}
% \curraddr{}
\email{liuzh3@sustech.edu.cn}

%    Information for the second author
\author{Zhizhou Liu}
%    Address of record for the research reported here
\address{Department of Mathematics, The Hong Kong University of Science and Technology, Hong Kong}
%    Current address
% \curraddr{}
\email{zliugm@connect.ust.hk}
%    \thanks will become a 1st page footnote.
\thanks{The first author is supported by the National Natural Science Foundation of China, No. 12101296, Shenzhen Basic Research Special Project (Natural Science Foundation) Basic Research (General Project), No. JCYJ20220530112814033, and Basic and Applied Basic Research Foundation of Guangdong Province, No. 2024A1515012348.} 

%    General info

\subjclass[2010]{Primary 60H35; 60H15, 65M60}

\keywords{monotone stochastic differential equation, 
numerical invariant measure,
numerical ergodicity,
stochastic Allen--Cahn equation,
Lyapunov structure}

\maketitle

\begin{abstract}
We first establish the unique ergodicity of the stochastic theta method (STM) with $\theta \in [1/2, 1]$ for monotone SODEs, without growth restriction on the coefficients, driven by nondegenerate multiplicative noise.
The main ingredient of the arguments lies in constructing new Lyapunov functions involving the coefficients, the stepsize, and $\theta$ and deriving a minorization condition for the STM.
We then generalize the arguments to the Galerkin-based full discretizations for a class of monotone SPDEs driven by infinite-dimensional nondegenerate multiplicative trace-class noise. 
Applying these results to the stochastic Allen--Cahn equation indicates that its Galerkin-based full discretizations are uniquely ergodic for any interface thickness.
Numerical experiments verify our theoretical results.
\end{abstract}

% \tableofcontents

\section{Introduction}

The long-time behavior of Markov processes generated by stochastic differential equations (SDEs) is a natural and intriguing question and has been investigated in recent decades. 
As a significant long-time behavior, the ergodicity characterizes the case of temporal average coinciding with spatial average, which has a lot of applications in quantum mechanics, fluid dynamics, financial mathematics, and many other fields \cite{DZ96}, \cite{HW19}.
The spatial average, i.e., the mean of a given test function for the invariant measure of the considered Markov process, also known as the ergodic limit, is desirable to compute in practical applications. One has to investigate a stochastic system over long time intervals, which is one of the main difficulties from the computational perspective.  
It is well-known that the explicit expression of the invariant measure for a stochastic nonlinear system is rarely available; exceptional examples are gradient Langevin systems driven by additive noise, e.g., \cite{BV10}, \cite{LV22}.
For this reason, a lot of investigations in the recent decade have been motivated and fascinated by constructing numerical algorithms that can inherit the ergodicity of the original system and approximate the ergodic limit efficiently.

Much progress has been made in the design and analysis of numerical approximations of the desired ergodic limits for SDEs under a strong dissipative condition so that the Markov chains are contractive, see, e.g., \cite{LMW23}, \cite{WL19}, and references therein for numerical ergodicity of backward Euler or Milstein schemes for dissipative SODEs, \cite{LMYY18} for dissipative SODEs with Markovian switching, and \cite{Bre14}, \cite{Bre22}, \cite{BK17}, \cite{BV16}, \cite{CGW20}, \cite{CHS21}, and \cite{Liu24} for approximating the invariant measures via temporal tamed, Galerkin-based linearity-implicit Euler or exponential Euler schemes, and high order integrator for parabolic SPDEs driven by additive noise.
See also \cite{MSH02} and \cite{GM23}, \cite{HM06} for numerical ergodicity of backward Euler scheme and its versions for monotone SODEs and spectral Galerkin approximation for 2-D stochastic Navier--Stokes equations, respectively, both driven by additive degenerate noise.

We note that most of the above works of literature focus on the numerical ergodicity of strong dissipative SDEs driven by additive noise; the numerical ergodicity for weak dissipative SDEs in the multiplicative noise case is more subtle.   
This question on the unique ergodicity of numerical approximations for monotone SDEs in finite and infinite-dimensional settings motivates the present study. 
Our main aim is to establish the unique ergodicity of the STM scheme \eqref{stm} with $\theta \in [1/2, 1]$, including the numerical schemes studied in \cite{HMS02}, \cite{MSH02}, and \cite{QLH14} for monotone SODEs, without growth restriction on the coefficients, driven by nondegenerate multiplicative noise. 

It is not difficult to show that $V(\cdot) = \abs{\cdot}^2+1$ is a natural Lyapunov function of the considered monotone \eqref{sde}.
However, it was shown in \cite{HJK11} that the Euler--Maruyama scheme (i.e., \eqref{stm} with $\theta=0$) applied to Eq.~\eqref{sde} with superlinear growth coefficients would blow up in $p$-th moment for all $p \ge 2$.
Therefore, $\abs{\cdot}^2+1$ is not an appropriate Lyapunov function of this scheme in the setting of the present study, and we mainly focus on the case $\theta \in [1/2, 1]$.
By exploring the monotone structure of the coefficients and martingale property of the driven Wiener process, we construct several Lyapunov functions (see, e.g., \eqref{lya} and \eqref{lya-mid} in Theorem \ref{tm-lya} and Corollary \ref{cor-lya-mid}, respectively) for the STM, which involve both the coefficients, the stepsize, and $\theta$.
In combination with a minorization condition followed by deriving the irreducibility and the existence of a jointly continuous density for the transition kernel (see Proposition \ref{prop-minor}), we conclude the geometric ergodicity of the proposed STM scheme \eqref{stm}.
  
Then we generalize our methodology to Galerkin-based fully discrete schemes (see \eqref{die-g}), which have been studied in \cite{CHS21}, \cite{GM09}, \cite{JR15}, \cite{Liu22}, and \cite{LQ21} for monotone SPDEs with polynomial growth coefficients driven by infinite-dimensional nondegenerate multiplicative trace-class noise (see Theorem \ref{tm-dieg}).  
Applying these results to the stochastic Allen--Cahn equation (SACE) driven by nondegenerate noise indicates that its Galerkin-based full discretizations are uniquely ergodic, respectively, for any interface thickness  (see Theorem \ref{tm-ac}).

The paper is organized as follows.
In Section \ref{sec2}, we give the principal assumptions on monotone SODEs and recall the ergodic theory of Markov chains that will be used throughout.
The STM's Lyapunov structure and minorization condition are explored in Section \ref{sec3}.
In Section \ref{sec4}, we generalize the arguments in Section \ref{sec3} to monotone SPDEs, including the SACE.
The theoretical results are validated by numerical experiments in Section \ref{sec5}.
We include some discussions in the last section.

\section{Assumptions on SODEs and Useful Facts from Markov Chain Theory}
\label{sec2}

In this section, we present the required main assumptions and recall the general ergodic theory of Markov chains that will be used throughout the paper. We denote the set $\{0,1,\dots\}$ by $\nn$ and the set $\{1,2,\dots\}$ by $\nn_+$.

\subsection{Main Assumptions on SODEs}
\label{sec2.1}

Let us first consider the $d$-dimensional SODE
\begin{equation}\label{sde}
    \dd X(t) = b(X(t))\dd t + \sigma(X(t)) \dd W(t), ~ t \ge 0, \tag{SODE}
\end{equation}
driven by an $\rr^m$-valued Wiener process $W$ on a complete filtered probability space $(\Omega, \FFF, \mathbb{F}:=(\FFF(t))_{t\geq 0}, \pp)$, where $b: \rr^d \rightarrow \rr^d$ and $\sigma:\rr^d \rightarrow \rr^{d \times m}$ are continuous functions.

Our main focus is on the invariant measure of the Markov chain generated by the following stochastic theta method (STM) with $\theta \in [0, 1]$:
\begin{equation}\label{stm}
    X_{j+1} = X_j  + (1-\theta) b(X_j ) \tau 
    + \theta b(X_{j+1}) \tau + 
    \sigma(X_j ) \delta_j W,   \tag{STM}
\end{equation}
where $\tau \in (0, 1)$ is a fixed step-size, $\delta_j W:=W(t_{j+1})-W(t_j)$ with $t_j=j \tau$, $j\in \nn$. 
When $\theta=0,1/2,1$, it is called the Euler--Maruyama scheme, the trapezoid scheme, and the backward Euler method, respectively. We shall write $\FFF_j:= \FFF(t_j)$ for notational convenience.

The following coupled monotone condition is our first primary condition on the drift $b$ and diffusion $\sigma$ in Eq.~\eqref{sde}.
Throughout, we denote by $\<\cdot,\cdot\>$ the standard inner product in $\rr^d$ or $\rr^m$, by $|\cdot|$ its induced norm, and by $\norm{\cdot}$ the Hilbert--Schmidt norm in $\rr^{d \times m}$. 
 
\begin{assumption} \label{A1}
  There exists a constant $L_1\in \rr$ such that
  \begin{equation} \label{mon}
      2\innerproduct{b(x)-b(y)}{x-y} + \norm{\sigma(x)-\sigma(y)}^2 
      \leq L_1 \abs{x-y}^2, \quad \forall~ x,y\in \rr^d.
  \end{equation} 
\end{assumption}

Under Assumption \ref{A1} and certain integrability condition on $b$ and $\sigma$, one can show the existence and uniqueness of the $(\FFF_t)_{t \ge 0}$-adapted solution to Eq. \eqref{sde}, see, e.g., \cite[Theorem 3.1.1]{LR15}.
The following lemma gives the analogous results for our numerical scheme \eqref{stm}. We shall always let this assumption hold to ensure the scheme we discuss is a well-defined Markov chain.

\begin{lemma}\label{lem:well-define}
  Let Assumption \ref{A1} hold. 
  Then STM scheme \eqref{stm} applied to Eq.~\eqref{sde} can be uniquely solved when $L_1 \theta \tau<2$. Moreover, $(X_n)_{n\in \nn}$ is adapted to $(\FFF_n)_{n\in \nn}$ and enjoys homogenous Markov property.
\end{lemma}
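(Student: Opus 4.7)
The plan is to verify well-posedness by rewriting the implicit step as a root-finding problem for a uniformly monotone continuous map, then read off adaptedness and the (homogeneous) Markov property from the resulting functional representation.

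First I would rewrite \eqref{stm} in the form $F(X_{j+1}) = Y_j$, where
\begin{equation*}
  F(x) := x - \theta \tau\, b(x), \qquad Y_j := X_j + (1-\theta)\tau\, b(X_j) + \sigma(X_j)\delta_j W.
\end{equation*}
Setting $y$ fixed in Assumption~\ref{A1} and discarding the nonnegative term $\|\sigma(x)-\sigma(y)\|^2$ gives the one-sided bound $2\langle b(x)-b(y),\,x-y\rangle \le L_1|x-y|^2$. Hence
\begin{equation*}
  \langle F(x)-F(y),\, x-y\rangle \;\ge\; \Bigl(1 - \tfrac{L_1 \theta \tau}{2}\Bigr)|x-y|^2,
\end{equation*}
and under the hypothesis $L_1\theta\tau < 2$ the map $F$ is strongly monotone with modulus $\kappa := 1 - L_1\theta\tau/2 > 0$. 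Together with the continuity of $b$, strong monotonicity implies coercivity of $F$ (pair $x$ with $0$) and injectivity (pair $x,y$); a standard invariance-of-domain / Browder--Minty argument for continuous strongly monotone maps on $\rr^d$ then yields that $F$ is a homeomorphism of $\rr^d$. Consequently, for every $Y_j$ there is a unique $X_{j+1} = F^{-1}(Y_j)$, and $F^{-1}$ is continuous.

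Next, with $\Phi(x,w) := F^{-1}\bigl(x + (1-\theta)\tau\, b(x) + \sigma(x) w\bigr)$, the recursion reads $X_{j+1} = \Phi(X_j,\,\delta_j W)$. Since $\Phi$ is Borel measurable (continuous, in fact), $X_j \in \FFF_j$ and $\delta_j W \in \FFF_{j+1}$, induction gives $X_j \in \FFF_j$ for every $j$. Because $\delta_j W$ is independent of $\FFF_j$ and identically distributed in $j$ (with law $\mathcal{N}(0,\tau I_m)$), and $\Phi$ does not depend on $j$, the transition kernel
\begin{equation*}
  P(x, A) = \pp\bigl(\Phi(x,\delta_0 W) \in A\bigr)
\end{equation*}
determines a time-homogeneous Markov chain by a routine conditioning argument.

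The only nontrivial step is the unique solvability of $F(x) = y$ on $\rr^d$; this is where the quantitative threshold $L_1\theta\tau < 2$ enters and is the reason the argument is restricted to $\theta > 0$ (for $\theta=0$ the scheme is explicit and well-posedness is automatic, but it is excluded from the present Lyapunov analysis). No growth hypothesis on $b$ or $\sigma$ beyond continuity and the coupled monotone bound \eqref{mon} is needed, which is consistent with the broader scope advertised in the introduction.
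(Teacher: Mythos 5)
Your proposal is correct and follows essentially the same route as the paper: both rewrite the implicit step as $\hat b(X_{j+1}) = X_j + (1-\theta)\tau\,b(X_j) + \sigma(X_j)\delta_j W$ with $\hat b(x) = x - \theta\tau\,b(x)$, derive uniform monotonicity with modulus $1 - L_1\theta\tau/2$ from Assumption \ref{A1} (after discarding the nonnegative diffusion term), invoke a standard invertibility result for continuous uniformly monotone maps, and then obtain adaptedness by induction and homogeneity from the i.i.d.\ increments. The only cosmetic difference is that you spell out the Browder--Minty/coercivity justification where the paper cites a reference.
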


\begin{proof}
  Define an auxiliary function $\hat b : \rr^d \to \rr^d$ by
    \begin{align} \label{hatb}
      \hat b(x) := x- \theta b(x) \tau, \quad x \in \rr^d.
    \end{align}
    Our numerical scheme \eqref{stm} can be rewritten as 
  \begin{equation*}
    \hat b(X_{j+1}) 
    = X_j + (1-\theta)b(X_j^h) \tau +\sigma(X_j )\delta_j W, \quad j \in \nn.
  \end{equation*}
  We want to show the invertibility of $\hat b$ through uniform monotonicity. Note that 
  \begin{align*}
      \<x-y, \hat b(x)-\hat b(y)\> 
      &= \innerproduct{x-y}{x-y-(b(x)-b(y))\theta \tau} \\
      &= |x-y|^2 - \innerproduct{x-y}{b(x)-b(y)} \theta \tau \\
      & \geq (1-L_1 \theta \tau/2 )|x-y|^2.
  \end{align*}
  When $L_1 \theta \tau<2$, we see that $\hat b$ defined in \eqref{hatb} is uniformly monotone and thus invertible (see, e.g., \cite[Theorem C.2]{SH96}), from which we can conclude the unique solvability of \eqref{stm}.

  As $X_{j+1}$ can be written as a function of $X_j$ and $\delta_j W= W(t_{j+1})-W(t_j)$ (which is $\FFF_{j+1}$-measurable), one can easily show by induction that $(X_n)_{n\in \nn}$ is adapted to $(\FFF_n)_{n\in \nn}$. 
  Finally, the homogenous Markov property is evident as $\delta_j W$ is stationary distributed and independent of $X_j$.
\end{proof}

Our second assumption is the following coupled coercive condition.

\begin{assumption} \label{A2}
  There exist two positive constants $L_2$ and $L_3$ such that
  \begin{equation} \label{coe}
      2\innerproduct{b(x)}{x} + \norm{\sigma(x)}^2 \leq  L_2 -
       L_3 \abs{x}^2, \quad \forall~ x\in \rr^d.
  \end{equation} 
\end{assumption}

We also need the following conditions for nondegeneracy and regularity. Here and after, $^\tr$ denotes the corresponding transpose of a matrix.

\begin{assumption} \label{A3}
\begin{enumerate}
\item
For each $x \in \rr^d$, the matrix $\sigma(x) \sigma(x)^\tr \in \rr^{d \times d}$ is positive definite. 

\item
$b$ and the inverse $[\sigma \sigma^\tr]^{-1}$ of $\sigma \sigma^\tr$ are continuously differentiable.
\end{enumerate}
\end{assumption}
 
Our assumptions are motivated by the below example.

\begin{example}\label{ex1}
Consider $d=m=1$ and 
\begin{align*}
b(x)=x-x^3, \quad \sigma(x)=\sqrt{x^2+1}, \quad x \in \rr. 
\end{align*}
Note that our choice of $\sigma$ is Lipschitz continuous with Lipschitz constant 1, so  
\begin{align*}
  &2 \innerproduct{b(x)-b(y)}{x-y}+\|\sigma(x)-\sigma(y)\|^2 \\
  =& 2(x-y)^2 (1-x^2-xy-y^2)+|\sqrt{x^2+1}-\sqrt{y^2+1}|^2 \\
  \leq &  \abs{x-y}^2 (3-2x^2-2xy-2y^2) \le 3\abs{x-y}^2,
\end{align*}
as $x^2+xy+y^2 \geq 0$, for all $x, y \in \rr$. This shows Assumption \ref{A1} holds with $L_1=3$. We now verify the second assumption:
\begin{equation*}
  2\innerproduct{b(x)}{x} + \|\sigma(x)\|^2= 2x^2-2x^4+x^2+1 \leq 3-x^2.
\end{equation*}
This shows Assumption \ref{A2} with $L_2=3$, $L_3=1$. Finally, it is clear that Assumption \ref{A3} holds. 
\end{example} 

\begin{remark}
  The above example also shows that even if we have a negative coefficient $-L_3$ in the growth condition \eqref{coe}, we can still consider \eqref{mon} with a positive coefficient $L_1$. These different dissipativity strengths would make the numerical ergodicity analysis very different; see the recent work \cite{LMW23} for the strongly dissipative case.
\end{remark}

Under assumptions very similar to us (Assumption \ref{A1}, \ref{A2} and \ref{A3}), Eq.~\eqref{sde} is ergodic, see, e.g., \cite{RWZ10} and \cite{Zha09} for the use of coupling methods based on a changing measure technique in combination of Girsanov theorem. 
In this paper, we focus on the numerical scheme, and such a Girsanov theorem does not exist in the discrete-time model.
Instead, we will utilize the general ergodic theory of Markov chains.

\subsection{Preliminaries on Ergodicity of Markov Chains}
\label{sec2.2}

Let $(H, \|\cdot\|)$ be a Hilbert space equipped with Borel $\sigma$-algebra $\BB(H)$, which will be chosen to be either $\rr^d$ in Section \ref{sec3}, or the finite-dimensional approximate spaces $V_h$ and $V_N$ in Section \ref{sec4}. 
Denote by $\mathcal N(a, b)$ and $\mu_{a, b}$ the normal distribution and Gaussian measure in $H$, respectively, with mean $a \in H$ and variance operator $b \in \LL(H)$ (the space of positive symmetric trace class operator on $H$). 

Let $(X^{x_0}_n)_{n \in \nn}$ (we will omit to write the dependence on $x_0$ for brevity) be an $H$-valued homogenous Markov chain on $(\Omega, \FFF, \pp)$ with initial state $x_0 \in H$ and transition kernel $P(x_0, B):=\pp(X_{n+1}\in B \mid X_n=x_0)$, $B \in \BB(H)$, $n\in \nn$. 
For $x \in H$, $B \in \BB(H)$, the $n$-step transition kernel is denoted by $P^n(x, B)$ (in particular, $P^1(x, B)=P(x, B)$) and defined inductively as
\[
P^n (x,B) := \int_H  P(y, B) P^{n-1}(x, \dd y), \quad n \in \nn_+.
\] 
We also use the same $P$ to denote the bounded linear operator in the set $\BB_b(H)$ of all bounded, measurable functions on $H$ if there is no confusion:
\[
P\phi(x) := \int_H \phi(y) P(x, \dd y), \quad x \in H,
\]
and $P_n \phi(x)= \int_H \phi(y) P^n(x, \dd y)$ for $n \ge 1$ and $x \in H$; in particular, $P_1=P$. 

To introduce the ergodic theory of Markov chains, we say that a probability measure $\mu$ on $\BB(H)$ is called \emph{invariant} for the Markov chain $(X_n)_{n \in \nn}$ or its transition kernel $P$, if 
\begin{align*}
\int_H P \phi(x) \mu({\rm d}x)=\mu(\phi):=\int_H \phi(x) \mu({\rm d}x),
\quad \forall~ \phi \in \BB_b(H).
\end{align*} 
This is equivalent to $\int_H P(x, A) \mu({\rm d}x)=\mu(A)$ for all $A \in \BB(H).$
An invariant (probability) measure $\mu$ is called \emph{ergodic} for $(X_n)_{n \in \nn}$ or $P$, if 
\begin{align} \label{df-erg}
\lim_{m \to \infty} \frac1m \sum_{k=0}^m P_k \phi(x)=\mu(\phi) 
\quad \text{in}~ L^2(H; \mu),\quad \forall~ \phi \in L^2(H; \mu).
\end{align}
It is well-known that if $(X_n)_{n \in \nn}$ admits a unique invariant measure, then it is ergodic; in this case, we call it uniquely ergodic.

We introduce three Lyapunov-type sufficient conditions (ergodic theorems) for unique ergodicity, which we will use in the following sections. We start by recalling the concept of \emph{$m$-small sets}. A set $C\in \BB(H)$ is called an \emph{$m$-small set} if there exists an integer $m>0$ and a non-trivial measure (i.e., $\nu(H)>0$) $\nu$ on $\BB(H)$ such that  
\begin{align*}
    P^m(x,B)\geq \nu (B), \quad \forall~x\in C,~ B\in \BB(H).
\end{align*} 
If the $m$-small set exists, we often say that the \emph{minorization condition} is satisfied for $P^m$. We say a Markov chain $(X_n)_{n \in \nn}$ is \emph{(open set) irreducible} if $P^n(x, A)>0$ for any $x \in H$, any non-empty open set $A$ in $H$ and some $n\in \nn_+$; it is called \emph{strong Feller} if $P(\cdot, A)$ is lower continuous for any Borel set $A \in \BB(H)$. We say the transition kernel $P$ of the Markov chain $(X_n)_{n\in \nn}$ is regular if the family of probability measures $\{P(x, \cdot): x\in H\}$ are mutually equivalent.

The \emph{first} classic ergodic theorem in Markov chain theory we needed is that if a Markov chain $(X_n)_{n\in \nn}$ is irreducible and strong Feller satisfying minorization condition for $P^m$, and there exists a positive function $V: H \to [1,\infty)$ and a constant $c$ such that
\begin{equation}\label{lya1}
  PV(x)-V(x) \leq -1 + c \chi_C(x), \quad \forall~ x \in H,
\end{equation} 
for some $m$-small set $C$, then the chain is uniquely ergodic. The \emph{second} ergodic theorem yields that with the same conditions as the first but \eqref{lya1} replaced by a stronger Lyapunov condition: there exists $\rho\in (0,1)$, $\kappa\geq 0$ and a positive function $V: H\to[1,\infty)$ with $\lim_{\|x\|\to\infty} V(x)=\infty$ such that
\begin{equation}\label{lya2}
    PV \leq \rho V +\kappa,
\end{equation}
the chain is geometrically ergodic, i.e., there exists $r\in (0,1)$ and $K \in (0,\infty)$ such that for all measurable $\phi$ with $|\phi|\leq V$, and the unique invariant measure $\mu$ of the chain, 
\[
\left| \ee \phi(X_n^{x_0})- \int_H \phi \dd \mu \right| \leq K r^n  V(x_0),
\quad x_0 \in \rr^d.
\]
In addition, \eqref{lya2} ensures the existence of an invariant measure. 
The \emph{third} ergodic theorem yields that given the existence of invariant measure, if the transition kernel $P$ of the chain is regular, then the chain is uniquely ergodic. 

We close this subsection by remarking that for the irreducible strong Feller chain in a finite-dimensional Hilbert space we considered, the condition $\lim_{\|x\|\to\infty}V(x)=\infty$ ensures that any sublevel set $\{V\leq C\}$ of $V$ is a compact $m$-small set (for some $m \in \nn_+$). We refer to \cite{MT09} for more details on Markov chain theory.

\section{Geometric Ergodicity of STM for Monotone SODEs}
\label{sec3}

Denote by $\AAA$ the infinitesimal generator of \eqref{sde}, i.e.,
\begin{align*}
\AAA :=\sum_{i=1}^d b^i \frac{\partial}{\partial x_i}
+\frac12 \sum_{i,j=1}^d (\sigma \sigma^\tr)_{ij} \frac{\partial^2}{\partial x_i\partial x_j}.
\end{align*}
Under Assumption \ref{A2}, it is not difficult to show that 
\begin{align*}
\AAA \abs{X(t)}^2=2\innerproduct{b(X(t))}{X(t)} + 
\norm{\sigma(X(t))}^2 \leq  L_2 - L_3 \abs{X(t)}^2, \quad t \ge 0.
\end{align*}
Hence by utilizing Ito's formula $V(\cdot) = \abs{\cdot}^2+1$ is a natural Lyapunov function of \eqref{sde}; see e.g. \cite[(2.2)]{MSH02} for more details.
However, it was shown in \cite{HJK11} that for \eqref{stm} with $\theta=0$ applied to \eqref{sde} with superlinear growth coefficients, $\lim_{n \to \infty} \ee |X_n|^p=\infty$ for all $p \ge 2$ so that $V(x) = \abs{x}^p+1$ is not an appropriate Lyapunov function of this scheme as the Lyapunov condition implies uniform moment stability (see Remark \ref{rk:3.1}(i)).

In this section, we first construct a Lyapunov function and then derive the irreducibility and existence of continuous density for \eqref{stm} with $\theta \in [1/2, 1]$. The case $\theta\in [0,1/2)$ is not covered.

\subsection{Lyapunov structure}
\label{sec3.1}

We begin with the following technical inequality, which will be helpful to explore the Lyapunov structure of \eqref{stm}.
It is a straightforward generalization of \cite[Lemma 3.3]{QLH14}. Intuitively, the inequality says that we can throw away the term involving $\|\sigma(x)\|$ in the left-hand side by sacrificing a bit on the coefficients.

\begin{lemma}\label{lm-in-stm}
  Let Assumption \ref{A2} hold.
  Then for any $\rho,\beta\in \rr$ with $\rho\geq \beta\geq 0$,
  it holds with $C = \frac{1+\beta L_3}{1+\rho L_3}$ that for all $x \in \rr^d$   
  \begin{equation}\label{in-stm}
          \abs{x-\beta b(x)}^2 + \beta  L_2 
          +(C \rho-\beta) \|\sigma(x)\|^2
          \le C (\abs{x-\rho b(x)}^2 + \rho L_2)
      \end{equation}   
\end{lemma}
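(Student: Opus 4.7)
The plan is to reduce the inequality to a non-negativity check on a quadratic expression in $|x|^2$, $\langle b(x),x\rangle$, $|b(x)|^2$, and $\|\sigma(x)\|^2$, and then to apply Assumption \ref{A2} in order to eliminate $\|\sigma(x)\|^2$. The definition of $C$ is engineered so that after this substitution, all cross terms cancel and what remains is visibly non-negative.

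First I would record two preliminary algebraic identities, both obtained by putting the fractions over the common denominator $1+\rho L_3$:
\begin{equation*}
  C-1 \;=\; \frac{(\beta-\rho)L_3}{1+\rho L_3},
  \qquad
  C\rho - \beta \;=\; \frac{\rho-\beta}{1+\rho L_3}\;\ge 0.
\end{equation*}
The second identity already tells us that the coefficient $C\rho - \beta$ of $\|\sigma(x)\|^2$ on the left-hand side of \eqref{in-stm} is non-negative, which is precisely what is needed to apply an upper bound for $\|\sigma(x)\|^2$ in our favour.

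Next, I would expand $|x-\beta b(x)|^2$ and $|x-\rho b(x)|^2$ in the obvious way and compute the difference RHS $-$ LHS of \eqref{in-stm}:
\begin{equation*}
  (C-1)|x|^2 + 2(\beta - C\rho)\langle b(x),x\rangle
  + (C\rho^2 - \beta^2)|b(x)|^2
  + (C\rho - \beta)L_2 - (C\rho - \beta)\|\sigma(x)\|^2.
\end{equation*}
Using Assumption \ref{A2} in the form $\|\sigma(x)\|^2 \le L_2 - L_3|x|^2 - 2\langle b(x),x\rangle$, together with the non-negativity of $C\rho - \beta$, one gets a lower bound in which the $L_2$ and $\langle b(x),x\rangle$ terms cancel exactly. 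What remains is
\begin{equation*}
  \bigl[(C-1) + (C\rho-\beta)L_3\bigr]\,|x|^2 + (C\rho^2 - \beta^2)\,|b(x)|^2.
\end{equation*}

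Finally I would verify that both coefficients above are non-negative. By the two identities recorded in the first step, the coefficient of $|x|^2$ equals $(\beta-\rho)L_3/(1+\rho L_3) + (\rho-\beta)L_3/(1+\rho L_3) = 0$ (this is exactly what pins down the specific choice of $C$). For the coefficient of $|b(x)|^2$, a short factorisation gives $C\rho^2 - \beta^2 = (\rho-\beta)(\rho+\beta+\rho\beta L_3)/(1+\rho L_3)\ge 0$, using $\rho \ge \beta \ge 0$ and $L_3>0$. This finishes the proof.

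Honestly there is no serious obstacle once the correct tactic is identified: the only non-obvious move is to use Assumption \ref{A2} on $\|\sigma(x)\|^2$ (rather than estimating $\langle b(x),x\rangle$ separately), which requires knowing that $C\rho-\beta\ge 0$. After that the argument is pure bookkeeping, and the two algebraic identities for $C-1$ and $C\rho-\beta$ make transparent why the particular value $C=(1+\beta L_3)/(1+\rho L_3)$ is chosen — it is the unique value that makes the $|x|^2$ coefficient vanish after the substitution.
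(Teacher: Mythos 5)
Your proof is correct and follows essentially the same route as the paper's: expand the difference of the two sides, use Assumption \ref{A2} to eliminate the $\|\sigma(x)\|^2$ and $\langle b(x),x\rangle$ terms (which requires $C\rho-\beta\ge 0$), and check that the choice of $C$ kills the $|x|^2$ coefficient while $C\rho^2-\beta^2\ge 0$ handles the rest. The only cosmetic difference is that the paper groups the relevant terms as $(C\rho-\beta)[2\langle x,b(x)\rangle - L_2 + \|\sigma(x)\|^2]$ before invoking the coercivity bound, whereas you substitute for $\|\sigma(x)\|^2$ directly; you also make explicit the sign condition $C\rho-\beta\ge 0$ that the paper leaves implicit.
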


\begin{proof}
Let $x \in \rr^d$. By separating $x$ and $b(x)$, we have
    \begin{align*}
        |x-&\beta b(x)|^2 + \beta L_2 +(C \rho-\beta) \|\sigma(x)\|^2 -  C (\abs{x-\rho b(x)}^2 + \rho L_2)\\
        = & (1-C)\abs{x}^2 + ( C\rho-\beta) [2\innerproduct{x}{b(x)}- L_2+ \|\sigma(x)\|^2] 
        + (\beta^2- C\rho^2)\abs{b(x)}^2.
    \end{align*}
    Then it follows from \eqref{coe} that the above equality can be bounded by 
    \begin{align*}
        [(1-C)-(C\rho-\beta) L_3]\abs{x}^2 + 
        (\beta^2-C\rho^2)\abs{b(x)}^2,
    \end{align*}
    from which we conclude \eqref{in-stm} by noting that $1-C=( C\rho-\beta) L_3$ and $\beta^2-C\rho^2 \le 0$ with $C = \frac{1+\beta L_3}{1+\rho L_3}$.
\end{proof}

Now, we state and prove the following strong Lyapunov structure for \eqref{stm} with $\theta\in (1/2,1]$, which guarantees the existence of the invariant measure.

\begin{theorem} \label{tm-lya}
  Let Assumption \ref{A1} and \ref{A2} hold and $\theta\in (1/2,1]$. 
  For any $\tau \in (0, 1)$ with $L_1 \theta \tau<2$, and any constant $\lambda \in (0,  2\theta-1]$, the function $V_\theta:\rr^d \to [1,\infty)$ defined by 
\begin{equation}\label{lya}
      V_\theta(x) := \abs{x-(1-\theta+\lambda)b(x) \tau}^2 +  
      (2\theta-1-\lambda) \norm{\sigma(x)}^2 \tau +1 \quad x \in \rr^d, 
  \end{equation}
  satisfies $\lim_{x\to\infty}V_\theta(x)=\infty$ and \eqref{lya2} with $\rho := \frac{1+(1-\theta)  L_3 \tau}{1+(1-\theta+\lambda)  L_3 \tau} \in (0, 1)$ and $\kappa := L_2 \tau +(1-\rho)>0$, i.e., for $n \in \nn$,
  \begin{equation}\label{lya+}
    \begin{split}
      \ee [V_\theta(X_{n+1})\mid \FFF_{n}] \leq & 
      \frac{1+(1-\theta)  L_3 \tau}{1+(1-\theta+\lambda)  L_3 \tau} V_\theta (X_n)
      +  L_2 \tau +  \frac{\lambda L_3 \tau}{1+(1-\theta+\lambda)  L_3 \tau}.
    \end{split}
  \end{equation}  
\end{theorem}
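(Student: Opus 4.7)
The plan is to exploit the defining relation of \eqref{stm}, namely
$$X_{n+1}-\theta b(X_{n+1})\tau=X_n+(1-\theta)b(X_n)\tau+\sigma(X_n)\delta_nW,$$
and bound $V_\theta(X_{n+1})$ by this ``pseudo-solution'' before conditioning. First I would show that
$$V_\theta(X_{n+1})\le|X_{n+1}-\theta b(X_{n+1})\tau|^2+(2\theta-1-\lambda)L_2\tau+1.$$
This comes from expanding
$$|X_{n+1}-(1-\theta+\lambda)b(X_{n+1})\tau|^2=|X_{n+1}-\theta b(X_{n+1})\tau|^2+2(2\theta-1-\lambda)\tau\<X_{n+1},b(X_{n+1})\>-(2\theta-1-\lambda)(1+\lambda)\tau^2|b(X_{n+1})|^2,$$
multiplying the coercive condition \eqref{coe} by $(2\theta-1-\lambda)\tau\ge0$, and observing that the resulting term $-2(2\theta-1-\lambda)\tau\<X_{n+1},b(X_{n+1})\>$ exactly cancels the cross term above while simultaneously absorbing the $(2\theta-1-\lambda)\tau\|\sigma(X_{n+1})\|^2$ contribution; two non-positive remainders (involving $|X_{n+1}|^2$ and $|b(X_{n+1})|^2$) are then discarded.

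Second, taking $\ee[\cdot\mid\FFF_n]$ and using that $\delta_nW$ has mean zero and covariance $\tau I_m$ independent of $\FFF_n$ yields
$$\ee[V_\theta(X_{n+1})\mid\FFF_n]\le|X_n+(1-\theta)b(X_n)\tau|^2+\|\sigma(X_n)\|^2\tau+(2\theta-1-\lambda)L_2\tau+1.$$
The third and decisive step is to dominate $|X_n+(1-\theta)b(X_n)\tau|^2+\|\sigma(X_n)\|^2\tau$ by $\rho V_\theta(X_n)+(2-2\theta+\lambda)L_2\tau$, which then combines with the previous display to produce exactly \eqref{lya+}. I would achieve this by (i) converting the ``$+$'' sign to a ``$-$'' sign via
$$|X_n+(1-\theta)b(X_n)\tau|^2=|X_n-(1-\theta)b(X_n)\tau|^2+4(1-\theta)\tau\<X_n,b(X_n)\>$$
followed by \eqref{coe}, and (ii) applying Lemma \ref{lm-in-stm} with $\beta=(1-\theta)\tau$ and $\rho_{\mathrm{lm}}=(1-\theta+\lambda)\tau$, for which the lemma's constant $C$ coincides with the theorem's $\rho$ and the absorbed $\|\sigma\|^2$ coefficient equals $\lambda\tau/D$ with $D=1+(1-\theta+\lambda)L_3\tau$. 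The key cancellations rest on the identities $\rho(1-\theta+\lambda)-(1-\theta)=\lambda/D$ and $1-\rho=\lambda L_3\tau/D$.

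The main obstacle is precisely this third step: a naive single use of \eqref{coe} only gives a drift bound of the form $\ee[V_\theta(X_{n+1})\mid\FFF_n]\le V_\theta(X_n)+L_2\tau$ with trivial rate $\rho=1$. To upgrade to $\rho<1$, the coercive condition has to be used twice in a coordinated fashion—once at $X_{n+1}$ in the first step and once at $X_n$ inside Lemma \ref{lm-in-stm}—so that the residual positive $\|\sigma(X_n)\|^2$ contribution produced by one use is offset by the lemma's $\lambda\tau/D\,\|\sigma(X_n)\|^2$ absorption from the other. Finally, $V_\theta(x)\ge1$ is obvious, and Lemma \ref{lm-in-stm} with $\beta=0$, $\rho_{\mathrm{lm}}=(1-\theta+\lambda)\tau$ gives the lower bound $V_\theta(x)\ge D|x|^2+\theta\tau\|\sigma(x)\|^2-(1-\theta+\lambda)L_2\tau+1$, which yields $\lim_{|x|\to\infty}V_\theta(x)=\infty$ together with $\rho\in(0,1)$ from $\lambda>0$ and $L_3\tau>0$.
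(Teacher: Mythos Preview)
Your plan is essentially the paper's own argument, reorganized: the paper first writes the exact identity \eqref{eq-stm} and only then applies coercivity at $X_{n+1}$ and $X_n$ and Lemma~\ref{lm-in-stm}, whereas you apply coercivity at $X_{n+1}$ up front (your Step~1), then condition, then run the sign flip plus Lemma~\ref{lm-in-stm} at $X_n$ (your Step~3). The expansion of $|X_{n+1}-(1-\theta+\lambda)b_{n+1}\tau|^2$ around $|X_{n+1}-\theta b_{n+1}\tau|^2$, the substitution of the STM relation, the identity $|X_n+(1-\theta)b_n\tau|^2=|X_n-(1-\theta)b_n\tau|^2+4(1-\theta)\tau\langle X_n,b_n\rangle$, the choice $\beta=(1-\theta)\tau$, $\rho_{\mathrm{lm}}=(1-\theta+\lambda)\tau$ in Lemma~\ref{lm-in-stm}, and the final coercivity lower bound giving $V_\theta(x)\to\infty$ are all identical to the paper's proof; your two algebraic identities $\rho(1-\theta+\lambda)-(1-\theta)=\lambda/D$ and $1-\rho=\lambda L_3\tau/D$ are exactly the relations the paper uses implicitly.

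One small bookkeeping slip: if your Step~3 literally bounds by $\rho V_\theta(X_n)+(2-2\theta+\lambda)L_2\tau$, the combination with Step~2 yields the additive constant $L_2\tau+1$, not $L_2\tau+(1-\rho)$ as in \eqref{lya+}. To land on the exact constant you should bound by $\rho\bigl(V_\theta(X_n)-1\bigr)+(2-2\theta+\lambda)L_2\tau$ in Step~3, so that the leftover ``$+1$'' from Step~2 combines with ``$-\rho$'' to give $(1-\rho)$; this mirrors the paper's device of working with $F_n=V_\theta(X_n)-1+L_2(1-\theta+\lambda)\tau$ and only subtracting the constant at the very end.
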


\begin{proof}
To simplify the notation in the following proof, we write as $b_n:=b(X_n)$ and $\sigma_n:=\sigma(X_n)$ and define 
\begin{align*}
F_n:=\abs{X_{n}-(1-\theta+\lambda)b_n \tau}^2 + 
L_2 (1-\theta+\lambda) \tau.
\end{align*}

Remember that we aim to obtain a recursive relation for $(V_\theta(X_n))_{n\in \nn}$. Firstly, we rearrange the terms to prepare for the substitution of $X_{n+1}$ by $X_n$ via their recursive relation: 
\begin{align*}
  F_{n+1}
  = & \abs{X_{n+1}-\theta b_{n+1} \tau +  (2\theta-1-\lambda)b_{n+1} \tau}^2 +    L_2 (1-\theta+\lambda) \tau \\
  = & \abs{X_{n+1}-\theta b_{n+1} \tau}^2 + 
  2(2\theta-1-\lambda) [\innerproduct{X_{n+1}}{b_{n+1}} \tau - 
  \theta \abs{b_{n+1}}^2 \tau^2] \\
    & \qquad  + (2\theta-1-\lambda)^2 \abs{b_{n+1}}^2 \tau^2 + 
      L_2 (1-\theta+\lambda) \tau \\
  = & \abs{X_{n+1}-\theta b_{n+1} \tau}^2 + 
  2(2\theta-1-\lambda)\innerproduct{X_{n+1}}{b_{n+1}} \tau  \\
  & \qquad + (\lambda+1-2\theta)(\lambda+1)\abs{b_{n+1}}^2 \tau^2 
  +  L_2 (1-\theta+\lambda) \tau.
\end{align*} 
We can substitute \eqref{stm} in the first term on the right-hand side of the above equality to obtain
\begin{align*}
  \abs{X_{n+1}-\theta b_{n+1} \tau}^2 
  = & \abs{X_n + (1-\theta) b_n \tau  + \sigma _n \delta_n W}^2 \\
  = & \abs{X_n + (1-\theta)b_n \tau}^2 + 2 \innerproduct{X_n + (1-\theta)b_n \tau}{\sigma_n
  \delta_n W} + \abs{\sigma_n\delta_n W}^2 \\
  = & \abs{X_n - (1-\theta)b_n \tau}^2 + 
  4(1-\theta) \innerproduct{X_n}{b_n} \tau  \\
  & \qquad + 2 \innerproduct{X_n + (1-\theta)b_n \tau}{\sigma_n
  \delta_n W} + \abs{\sigma_n\delta_n W}^2.
\end{align*}
It follows that
\begin{equation}\label{eq-stm}
  \begin{split}
    F_{n+1}= & \abs{X_n - (1-\theta)b_n \tau}^2 + 4(1-\theta) 
  \innerproduct{X_n}{b_n} \tau \\
  & \qquad + 2(2\theta-1-\lambda)\innerproduct{X_{n+1}}{b_{n+1}} \tau \\
  & \qquad + (\lambda+1-2\theta) (\lambda+1) \abs{b_{n+1}}^2 \tau^2 
  + \norm{\sigma_n}^2 \tau \\
  & \qquad +  L_2 (1-\theta +\lambda) \tau + O_n,
  \end{split}
\end{equation}
where 
\begin{align*}
O_n = 2\innerproduct{X_n + (1-\theta)b_n \tau}{\sigma_n
\delta_n W} +\abs{\sigma_n\delta_n W}^2 
- \norm{\sigma_n}^2 \tau.
\end{align*}
Note that $\delta_n W$ is independent of $\FFF_n$ so that by the normal distributed natural of $\delta_n W$, we see that $\ee[O_n \mid \FFF_n]=0$.

Following \eqref{eq-stm} and Lemma \ref{lm-in-stm} with $\rho=(1-\theta+\lambda) \tau$, $\beta=(1-\theta) \tau$, $C=N_\tau:=\frac{1+(1-\theta)  L_3 \tau}{1+(1-\theta+\lambda)  L_3 \tau} \in (0, 1)$, and $C \rho-\beta=B_\tau:=\frac{\lambda \tau}{1+(1-\theta+\lambda)  L_3 \tau}$, and discarding the term $(\lambda+1-2\theta) (\lambda+1) \abs{b_{n+1}}^2 \tau^2$ under the assumption $0<\lambda \le 2\theta-1$, we have
      \begin{align*}
          F_{n+1}  
          \leq & \abs{X_n-(1-\theta)b_n \tau}^2 + 
          (1-\theta) L_2 \tau +  
          4(1-\theta)\innerproduct{X_n}{b_n} \tau + 
          \norm{\sigma_n}^2 \tau \\
              & \qquad + 2(2\theta-1-\lambda)\innerproduct{X_{n+1}}
              {b_{n+1}} \tau  +  L_2 \lambda \tau + O_n \\
          \leq & N_\tau F_n -B_\tau \|\sigma_n\|^2 \tau+ 4(1-\theta)
          \innerproduct{X_n}{b_n} \tau + \norm{\sigma_n}^2 \tau \\
              & \qquad + 2(2\theta-1-\lambda)
              \innerproduct{X_{n+1}}{b_{n+1}} \tau  +  L_2 \lambda \tau + O_n \\
          \leq & N_\tau F_n + 2(1-\theta) ( L_2- L_3 |X_n|^2- \norm{\sigma_n}^2) \tau + (1-B_\tau)\norm{\sigma_n}^2 \tau \\
          &  \qquad +   (2\theta-1-\lambda)[ L_2- L_3 |X_{n+1}|^2-\norm{\sigma_{n+1}}^2] \tau  + \lambda L_2 \tau + O_n.
      \end{align*} 
Taking the conditional expectation $\ee [\cdot\mid \FFF_n]$ on both sides of the above inequality, using the fact that $\ee [O_n \mid \FFF_n]=0$, and discarding the terms $-2(1-\theta)  L_3 |X_n|^2$ and $-(2\theta-1-\lambda) L_3 |X_{n+1}|^2$ under the assumption $\lambda+1-2\theta \leq 0$, $n \in \nn$, we have  
      \begin{equation}\label{eq:Lya-b}
        \begin{aligned}
          \ee [F_{n+1} + &(2\theta-1-\lambda) \norm{\sigma_{n+1}}^2 \tau
            \mid \FFF_n]\\
            \leq & N_\tau  F_n +  [1-B_\tau-2(1-\theta)]\norm{\sigma_n}^2 \tau +
             L_2 \tau \\
          \leq & N_\tau [F_n + (2\theta-1-\lambda)  
          \norm{\sigma_n}^2 \tau] +  L_2 \tau,
        \end{aligned}
      \end{equation}
      where in the last inequality, we have used 
      \begin{align*}
      1-B_\tau-2(1-\theta) < N_\tau (2\theta-1-\lambda), 
      \end{align*}
      which is ensured by the estimate
      \begin{align*} 
      \frac{1+(1+\lambda)  L_3 \tau}{1+(1-\theta+\lambda)  L_3 \tau}>1.
      \end{align*} 
Therefore, the relation \eqref{lya+} follows from \eqref{eq:Lya-b} by subtracting the same constant $L_2 (1-\theta+\lambda) \tau$ from both sides.

Finally, we point out that the function $V_\theta: \rr^d \to [0,\infty)$ defined by \eqref{lya} satisfies $\lim_{x\to\infty}V_\theta(x)=\infty$ so that it is a Lyapunov function.
      Indeed, 
      \begin{align} \label{pf-lya}
          V_\theta(x) \geq & \abs{x}^2 -2(1-\theta+\lambda)
          \innerproduct{x}{b(x)} \tau + 
          (2\theta-1-\lambda) \norm{\sigma(x)}^2 \tau   \nonumber \\
          \geq & \abs{x}^2 -(1-\theta+\lambda)
          [ L_2 - L_3\abs{x}^2 - \norm{\sigma(x)}^2] \tau  + (2\theta-1-\lambda) \norm{\sigma(x)}^2 \tau \nonumber  \\
          = & [1+(1-\theta+\lambda) L_3 \tau]\abs{x}^2 + 
          \theta \norm{\sigma(x)}^2 \tau-L_2(1-\theta+\lambda) \tau \nonumber  \\
          \ge & [1+(1-\theta+\lambda) L_3 \tau]\abs{x}^2-L_2(1-\theta+\lambda) \tau,  
      \end{align}
      which tends to $\infty$ as $x \to \infty$.
  \end{proof}

\begin{remark}\label{rk:3.1}
\begin{enumerate}[label=(\roman*)]
\item
It follows from the estimation \eqref{lya+} that 
\begin{align*} 
\ee V_\theta(X_n) \leq \rho^n \ee V_\theta(X_0)+ \frac{L_2 \tau}{1-\rho}
\leq \ee V_\theta(X_0)+ \frac{L_2 \tau}{1-\rho}, \quad n \in \nn,
\end{align*} 
with $\rho=\frac{1+(1-\theta)  L_3 \tau}{1+(1-\theta+\lambda)  L_3 \tau}$, 
which shows the uniform moment stability of $(V_\theta(X_n))_{n \ge 1}$.
          
\item
For the backward Euler scheme, a special case of \eqref{stm} with $\theta=1$, one can take the Lyapunov function (with $\lambda=1$) as  
\begin{align*}
  V_1(x)=\abs{x- b(x) \tau}^2+1, \quad x \in \rr^d, 
\end{align*}  
which coincides with the choice in \cite[Section 8.2]{MSH02} for dissipative problems.
\end{enumerate}
\end{remark}

For $\theta=1/2$, the trapezoid scheme, we have the following weak Lyapunov structure, which is sufficient, in combination with the irreducibility and strong Feller property in the next part, to derive the unique ergodicity.

\begin{corollary} \label{cor-lya-mid}
Let Assumption \ref{A1} and \ref{A2} hold.
For $\tau \in (0, 1)$, the function $V_{1/2}:\rr^d \to [1,\infty)$ defined by  
\begin{align} \label{lya-mid}
    V_{1/2}(x)=|x- \frac12 b(x) \tau|^2 +1, \quad x \in \rr^d, 
\end{align} 
satisfies 
\begin{equation}\label{lya-mid+}
\ee [V_{1/2}(X_{n+1})\mid \FFF_{n}] \leq V_{1/2}(X_n) +  L_2 \tau- L_3 |X_n|^2\tau, \quad n \in \nn.
\end{equation} 

Consequently, \eqref{lya1} holds with $c:=L_2 \tau+1$ and $C:=\{x \in \rr^d: ~ |x|^2 \le \frac{L_2 \tau+1}{L_3 \tau}\}$ which is a compact $m$-small set.
\end{corollary}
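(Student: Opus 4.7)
The plan is to imitate the calculation used in Theorem \ref{tm-lya}, but exploit the \emph{symmetry} special to $\theta=1/2$, namely that the implicit and explicit weights are equal. This causes the troublesome $|b(X_{n+1})|^2$ term (which forced the restriction $\lambda\le 2\theta-1$ in Theorem \ref{tm-lya}) to collapse exactly when $\theta=1/2$, so no recourse to Lemma \ref{lm-in-stm} is needed.

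First I would rewrite \eqref{stm} with $\theta=1/2$ in the symmetric form
\begin{equation*}
X_{n+1}-\tfrac12 b(X_{n+1})\tau = X_n+\tfrac12 b(X_n)\tau + \sigma(X_n)\delta_n W,
\end{equation*}
and square both sides. Expanding the right-hand side, and using the polarization identity
\begin{equation*}
\bigl|X_n+\tfrac12 b(X_n)\tau\bigr|^2 = \bigl|X_n-\tfrac12 b(X_n)\tau\bigr|^2 + 2\innerproduct{X_n}{b(X_n)}\tau,
\end{equation*}
I obtain
\begin{equation*}
\bigl|X_{n+1}-\tfrac12 b(X_{n+1})\tau\bigr|^2
= \bigl|X_n-\tfrac12 b(X_n)\tau\bigr|^2 + 2\innerproduct{X_n}{b(X_n)}\tau + 2\innerproduct{X_n+\tfrac12 b(X_n)\tau}{\sigma(X_n)\delta_n W} + |\sigma(X_n)\delta_n W|^2.
\end{equation*}
Taking $\ee[\cdot\mid \FFF_n]$ annihilates the cross term (since $\delta_n W$ has zero mean and is independent of $\FFF_n$) and replaces $|\sigma(X_n)\delta_n W|^2$ by $\|\sigma(X_n)\|^2 \tau$. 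Adding $1$ to both sides and invoking the coercive condition \eqref{coe} yields \eqref{lya-mid+} at once.

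For the second assertion, note that \eqref{lya-mid+} gives $PV_{1/2}(x)-V_{1/2}(x)\le L_2\tau - L_3 |x|^2\tau$. With $c:=L_2\tau+1$ and $C:=\{x\in\rr^d: |x|^2\le (L_2\tau+1)/(L_3\tau)\}$, one just splits on $x\in C$ and $x\notin C$: outside $C$ the dissipative term $-L_3|x|^2\tau$ alone dominates $-1-L_2\tau$; inside $C$ the trivial bound $-L_3|x|^2\tau\le 0$ gives $PV_{1/2}(x)-V_{1/2}(x)\le L_2\tau= -1+c$. This is exactly \eqref{lya1}. Compactness of $C$ is automatic (closed ball in $\rr^d$), while the $m$-small property is not proved here but follows from the minorization condition stated separately as Proposition \ref{prop-minor}, which is invoked as a black box.

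There is essentially no obstacle beyond spotting the symmetric rearrangement of the scheme; the calculation is a short drift–diffusion expansion, and no analogue of Lemma \ref{lm-in-stm} is required because the residual coefficient $\lambda+1-2\theta$ on $|b(X_{n+1})|^2\tau^2$ vanishes at $\theta=1/2,\lambda=0$. The only conceptual point to emphasize is that the resulting weak Lyapunov bound \eqref{lya1} is not strong enough for geometric ergodicity on its own, which is why the statement only claims \eqref{lya1} and leaves the ergodic conclusion to be completed in combination with the irreducibility / strong Feller results of Section \ref{sec3}.
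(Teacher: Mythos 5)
Your proof is correct and follows essentially the same route as the paper: the symmetric rewriting $X_{n+1}-\tfrac12 b(X_{n+1})\tau = X_n+\tfrac12 b(X_n)\tau+\sigma(X_n)\delta_n W$, the identity $|X_n+\tfrac12 b_n\tau|^2=|X_n-\tfrac12 b_n\tau|^2+2\innerproduct{X_n}{b_n}\tau$, vanishing of the martingale term under $\ee[\cdot\mid\FFF_n]$, and the coercivity bound are exactly the specialization of the Theorem~\ref{tm-lya} computation at $\theta=1/2$, $\lambda=0$ that the paper carries out, with Lemma~\ref{lm-in-stm} indeed not needed. The verification of \eqref{lya1} by splitting on $x\in C$ versus $x\notin C$ and the deferral of the $m$-small property to the minorization results also match the paper's treatment.
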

  
\begin{proof}
We inherit the notations from the proof of Theorem \ref{tm-lya} with $\theta=1/2$ and $\lambda=0$. Then, following the arguments there, we can define
\begin{align*}
F_n=|X_{n}-\frac12 b_n \tau|^2 + \frac12 L_2 \tau, 
\end{align*}
and have
\begin{align*} 
  F_{n+1}= F_n + 2 \innerproduct{X_n}{b_n} \tau
  + \norm{\sigma_n}^2 \tau + O_n,
\end{align*}
where $O_n = 2\<X_n + \frac12 b_n \tau, \sigma_n
\delta_n W\> +\abs{\sigma_n\delta_n W}^2 
- \norm{\sigma_n}^2 \tau$ satisfies $\ee [O_n \mid \FFF_n]=0$ for $n \in \nn$.

By \eqref{coe}, we have  
\begin{align*}
    F_{n+1}  
    \leq & F_n + L_2 \tau - L_3 |X_n|^2 \tau + O_n.
\end{align*} 
Taking the conditional expectation $\ee [\cdot\mid \FFF_n]$ on both sides of the above inequality and using $\ee [O_n \mid \FFF_n]=0$, we have  
\begin{align*}
    \ee [F_{n+1} \mid \FFF_n] \leq  F_n +  L_2 \tau - L_3 |X_n|^2,
\end{align*} 
and thus shows \eqref{lya-mid+} with $V_{1/2}$ defined in \eqref{lya-mid}. 

Finally, the inequality \eqref{lya1} with $c=L_2 \tau+1$ and $C:=\{x \in \rr^d: ~ |x|^2 \le \frac{L_2 \tau+1}{L_3\tau}\}$ follows from \eqref{lya-mid+} by direct calculations. We know that $C$ is a compact $m$-small set from the remark at the end of Section \ref{sec2.2}. 
\end{proof}

  \subsection{Minorization condition and unique ergodicity of STM}
\label{sec3.2}

To derive the required minorization condition for \eqref{stm}, in this part, we denote by $P(x,A)$, $x \in \rr^d$, $A \in \BB(\rr^d)$, the transition kernel of the Markov chain $(X_n)_{n \in \nn}$ generated by \eqref{stm}.
Recall that for the homogenous Markov chain $(X_n)_{n \in \nn}$, we have for any $n \in \nn$, 
\begin{align*}
P(x,A)=\pp(X_{n+1}\in A\mid X_n=x), \quad x \in \rr^d, ~ A \in \BB(\rr^d).
\end{align*}

Let $x \in \rr^d$, $A \in \BB(\rr^d)$ be a non-empty open set, and set $\widetilde{b}(x):=x+(1-\theta)b(x) \tau$. 
Then, the above Markov property yields that
\begin{align} \label{pxa}
    P(x, A) 
=\pp(\widetilde{b}(x) + \sigma(x) \delta_n W \in \hat{b}(A)) 
= \mu_{\widetilde{b}(x), \sigma(x)\sigma(x)^\tr \tau}(\hat{b}(A)),
\end{align} 
as $\widetilde{b}(x) +\sigma(x) \delta_n W \sim \mathcal N(\widetilde{b}(x), \sigma(x)\sigma(x)^\tr \tau)$, 
where $\hat b$ is defined in \eqref{hatb}. Due to the nondegeneracy of $\sigma$ in Assumption \ref{A3}(i), the Gaussian measure $\mu_{\widetilde{b}(x), \sigma(x)\sigma(x)^\tr \tau}$ is nondegenerate.
As we all know, in the finite-dimensional case, all non-degenerate Gaussian measures $\{\mu_{\widetilde{b}(x), \sigma(x)\sigma(x)^\tr \tau}:x\in \rr^d \}$ are equivalent to the underlying Lebesgue measure.
Thus, $P$ is regular.  

Moreover, the following minorization condition holds for the STM scheme \eqref{stm}.

\begin{proposition} \label{prop-minor}
Let Assumptions \ref{A1}, \ref{A2}, and \ref{A3} hold, and $\theta\in [1/2,1]$.  
Then for any $\tau \in (0, 1)$ with $L_1 \theta \tau<2$, there exists an $m$-small set $C$ for the Markov chain $(X_n)_{n \in \nn}$ generated by \eqref{stm}, i.e., the minorization condition is satisfied for $P^m$. In addition, the chain is irreducible and strong Feller.
\end{proposition}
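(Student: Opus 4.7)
The plan is to derive all three properties at once from the existence of a jointly continuous, strictly positive transition density $p(x,y)$ of the one-step kernel $P$ with respect to Lebesgue measure on $\rr^d$. With such a density in hand, the minorization, irreducibility, and strong Feller properties all reduce to elementary facts, and a single step ($m=1$) suffices.

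To produce the density, I would start from the explicit representation \eqref{pxa}, which writes $P(x,A)=\mu_{\widetilde b(x),\,\sigma(x)\sigma(x)^\tr \tau}(\hat b(A))$ with $\widetilde b(x)=x+(1-\theta)b(x)\tau$, and then change variables $z=\hat b(y)$ inside the Gaussian integral. This requires $\hat b$ to be a $C^1$-diffeomorphism of $\rr^d$: continuous differentiability comes from Assumption \ref{A3}(ii), while the uniform monotonicity of $\hat b$ established in Lemma \ref{lem:well-define} (with constant $1-L_1\theta\tau/2>0$) forces the symmetric part of $I-\theta\tau\nabla b(y)$ to be positive definite, hence invertible at every $y$. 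The Gaussian density is available because $\sigma(x)\sigma(x)^\tr$ is positive definite by Assumption \ref{A3}(i). Putting these together yields
\[
p(x,y)=\frac{|\det(I-\theta\tau \nabla b(y))|}{(2\pi\tau)^{d/2}\sqrt{\det(\sigma(x)\sigma(x)^\tr)}}\,\exp\Bigl(-\tfrac{1}{2\tau}(\hat b(y)-\widetilde b(x))^\tr[\sigma(x)\sigma(x)^\tr]^{-1}(\hat b(y)-\widetilde b(x))\Bigr),
\]
which is jointly continuous in $(x,y)$ by Assumption \ref{A3}(ii) and strictly positive everywhere.

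From $p(x,y)>0$ for all $(x,y)$, irreducibility is immediate with $n=1$: $P(x,A)=\int_A p(x,y)\,\dd y>0$ for every nonempty open $A$. For the strong Feller property, pointwise continuity of $p(\cdot,y)$ together with Scheff\'e's theorem (since each $p(x_n,\cdot)$ integrates to one) gives $\int|p(x_n,y)-p(x,y)|\,\dd y\to 0$ whenever $x_n\to x$, so $P\phi$ is continuous for every bounded measurable $\phi$. For the minorization condition I would take any compact set $C\subset\rr^d$ (chosen in the next subsection to match the Lyapunov sub-level sets from Theorem \ref{tm-lya} or Corollary \ref{cor-lya-mid}): joint continuity and positivity yield $\epsilon:=\inf_{(x,y)\in C\times C}p(x,y)>0$, whence $\nu(B):=\epsilon\,\mathrm{Leb}(B\cap C)$ is a nontrivial measure satisfying $P(x,B)\ge\nu(B)$ for every $x\in C$, showing that $C$ is $1$-small. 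The main obstacle is the smoothness and global invertibility needed to justify the change of variables and to conclude joint continuity of $p$; that is precisely what Assumption \ref{A3} and Lemma \ref{lem:well-define} are designed to provide, so once the density is written down, everything else is essentially immediate.
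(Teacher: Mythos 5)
Your proposal is correct and follows essentially the same route as the paper: both hinge on the change of variables $z=\hat b(y)$ in the Gaussian representation \eqref{pxa}, justified by the nondegeneracy of $\sigma\sigma^\tr$ (Assumption \ref{A3}) and the everywhere-invertible Jacobian of $\hat b$ obtained from its uniform monotonicity, yielding the jointly continuous density $p(x,y)$. The only differences are cosmetic: the paper invokes \cite[Lemma 2.3]{MSH02} to pass from irreducibility plus a jointly continuous density to the minorization condition and gets irreducibility from the open-mapping property of $\hat b$, whereas you derive the $1$-smallness of a compact set directly from $\inf_{C\times C}p>0$ and irreducibility from strict positivity of $p$, and you make the strong Feller property explicit via Scheff\'e's theorem (just make sure the chosen compact set has positive Lebesgue measure, which the Lyapunov sublevel sets do).
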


\begin{proof}
Due to \cite[Lemma 2.3]{MSH02}, for the minorization condition, it suffices to show the irreducibility and the existence of a jointly continuous density for the transition kernel $P(x, \cdot)$ of $(X_n)_{n\in \nn}$. 

Let $A \in \BB(\rr^d)$ be a non-empty open set.
It follows from Assumption \ref{A3}(ii) that $\hat b$ defined in \eqref{hatb} is continuously differentiable.
Then, by the inverse function theorem, $\hat b$ is an open map that maps open sets to open sets so that the open set $\hat b(A)$ has a positive measure under the nondegenerate Gaussian measure $\mu_{\widetilde{b}(x), \sigma(x)\sigma(x)^\tr \tau}$.
This implies $P(x, A)>0$ and shows the irreducibility of $(X_n)_{n \in \nn}$ in $\rr^d$.
         
To show the existence of a jointly continuous density for the transition kernel $P(x, \cdot)$, we set $A \in \BB(\rr^d)$ and use \eqref{pxa} and change of variables to derive 
\begin{align*}
P(x,A)=& 
\int_{\hat b(A)} \frac{\exp\{ - (y-\widetilde{b}(x))^\tr 
[\sigma(x)\sigma(x)^\tr \tau]^{-1}(y-\widetilde{b}(x))/2\}}
{\sqrt{(2\pi)^d \det{[\sigma(x)\sigma(x)^\tr \tau]}}} \dd y\\
=&\int_{A} p(x,y) \dd y,
\end{align*} 
where for $y \in \rr^d$,
\begin{align} \label{density}
p(x,y):=\frac{\exp\{-(\hat b(y)-\widetilde{b}(x))^\tr 
[\sigma(x)\sigma(x)^\tr \tau]^{-1}(\hat b(y)-\widetilde{b}(x))/2\}}
{\sqrt{(2\pi)^d \det{[\sigma(x)\sigma(x)^\tr \tau] }}} 
\abs{\det J_{\hat b}(y)}.  
\end{align}   
In the change of variables, we used the fact that $\det J_{\hat b} (x) \neq 0$.
Indeed, suppose that $\det J_{\hat b} (x) = 0$, then $0$ must be an eigenvalue of $J_{\hat b}(x)$ with a corresponding eigenvector $v\neq 0$. Notice that by the continuous differentiability of $\hat b$, we have
\[
\hat{b}(x+z) = \hat{b}(x) + J_{\hat b}(x)z + o(|z|), \quad \mbox{when $\rr^d \ni z \to 0$}.
\]
The uniform monotonicity of $\hat b$ shown in the proof of Lemma \ref{lem:well-define} implies that 
\[
\< z, J_{\hat b}(x)z + o(|z|) \> \geq c |z|^2, \quad \mbox{when $z \to 0$},
\]
with $c>0$.
Let $z:= kv$ with $k\to 0$ and we write $o(|z|)$ explicitly as $\psi(z)z$ with $\psi(z)\to 0$ as $z\to 0$. Then, the inequality above becomes
\[
\< kv, J_{\hat b}(x)kv + \psi(kv)kv \> \geq c k^2 |v|^2, \quad \mbox{when $k \to 0$}.
\] 
As $v$ is an eigenvector of $0$, it follows that $\psi(kv)\geq c>0$ as $k\to 0$, which is a contradiction because we assume $\psi(kv)\to 0$ as $k\to 0$.

Therefore, $P(x, \cdot)$ possesses a density $p(x, \cdot)$ given by \eqref{density} with respect to the Lebesgue measure in $\rr^d$.
For each $x_0, y_0\in \rr^d$, 
$p(x_0,y)$ is continuously differentiable in $y$ in the neighborhood 
of $x_0$ (thus locally uniformly continuous) and 
$\lim_{x\to x_0}p(x, y_0)=p(x_0, y_0)$ exists.
Once the uniform continuity on one variable is obtained, we can interchange the order of limit and able to claim  
\begin{align*}
\lim_{\substack{x \to x_0 \\ y \rightarrow y_0}}
p(x, y)\xlongequal{exists} \lim_{x\to x_0}\lim_{y \to y_0} p(x,y) =
\lim_{y \to y_0}\lim_{x\to x_0} p(x,y),
\end{align*}
which shows the joint continuity of the density $p$ defined above.  
\end{proof}

 \begin{remark} 
 It should be pointed out that the (one-step) irreducibility may be lost for the following 
Milstein-type of scheme:
\begin{align*} 
Y_{j+1} & =Y_j+(1-\theta) b(Y_j) \tau
+ \theta b(Y_{j+1}) \tau 
+\sigma(Y_j) \delta_j W   \\
& \quad + \sigma'(Y_j) \sigma(Y_j) 
\Big[\int_{t_j}^{t_{j+1}} (W(r)-W(t_j)) {\rm d}W_r \Big], \quad j \in \nn,  
\end{align*}
under slightly more regularity and commutativity conditions on $\sigma$ as in \cite{WL19}.
Indeed, for $d=m=1$, it is clear that the above Milstein scheme is equivalent to 
\begin{align*} 
Y_{j+1} & =Y_j+(1-\theta) b(Y_j) \tau
+ \theta b(Y_{j+1}) \tau 
+\sigma(Y_j) \delta_j W \\
& \quad  +  \frac12 \sigma'(Y_j) \sigma(Y_j) [(\delta_j W)^2-\tau], 
\quad j \in \nn.
\end{align*}  
If $\sigma'$ is invertible, for any $x \in \rr^d$, it is not difficult to show the existence of a non-empty open set $A \subset \BB(\rr)$ such that $P(x, A)=0$.
  \end{remark}

In combination with the Lyapunov structures developed in Theorem \ref{tm-lya} and Corollary \ref{cor-lya-mid} and the minorization condition derived in Proposition \ref{prop-minor}, we have the following uniform ergodicity of the STM scheme \eqref{stm} with $\theta=1/2$ and geometric ergodicity of the STM scheme \eqref{stm} with $\theta \in (1/2, 1]$.

\begin{theorem} \label{tm-erg}
      Let Assumptions \ref{A1}, \ref{A2}, and \ref{A3} hold. Then for any $\tau \in (0, 1)$ with $L_1 \theta \tau<2$, the STM scheme \eqref{stm} is uniquely ergodic with $\theta=1/2$, and 
it is geometric ergodic for $\theta \in (1/2, 1]$ with respect to the unique invariant measure $\pi_\tau^\theta$, i.e., there exists $r \in (0,1)$ and $b \in (0,\infty)$ such that for all measurable $\phi$ with $|\phi|\leq V$,
\begin{align*}
\abs{\ee \phi(X_n^{x_0})- \int_{\rr^d} \phi(y) \pi_\tau^\theta(\dd y)}
\leq b r^n V(x_0), \quad x_0 \in \rr^d.
\end{align*}  
\end{theorem}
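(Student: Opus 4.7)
The plan is to assemble the theorem by matching the two cases $\theta=1/2$ and $\theta\in(1/2,1]$ with the corresponding abstract ergodic criteria recalled at the end of Section \ref{sec2.2}, using the results already established in Theorem \ref{tm-lya}, Corollary \ref{cor-lya-mid}, and Proposition \ref{prop-minor}. Essentially all the technical work has been done; what remains is to verify the hypotheses of the appropriate ergodic theorem and apply it. Well-definedness of the scheme as a homogeneous Markov chain is guaranteed by Lemma \ref{lem:well-define}, since we assume $L_1 \theta \tau < 2$.

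For the case $\theta = 1/2$, I would first invoke Proposition \ref{prop-minor} to obtain that the Markov chain generated by \eqref{stm} is irreducible, strong Feller, and satisfies the minorization condition for some $P^m$. Corollary \ref{cor-lya-mid} supplies the weak Lyapunov inequality \eqref{lya1} with $V_{1/2}(x) = |x - \tfrac{1}{2} b(x)\tau|^2 + 1$, the constant $c = L_2\tau + 1$, and the compact $m$-small set $C = \{x \in \rr^d : |x|^2 \le (L_2\tau+1)/(L_3\tau)\}$. Since $V_{1/2} \ge 1$ and the sublevel set above is compact and $m$-small (by Proposition \ref{prop-minor} and the remark at the end of Section \ref{sec2.2}), the first ergodic theorem recalled in Section \ref{sec2.2} applies and yields unique ergodicity.

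For the case $\theta \in (1/2, 1]$, I would again invoke Proposition \ref{prop-minor} for irreducibility, strong Feller, and the minorization condition. Theorem \ref{tm-lya} then provides the stronger Lyapunov inequality \eqref{lya2} for the function $V_\theta$ defined by \eqref{lya}: namely $P V_\theta \le \rho V_\theta + \kappa$ with $\rho \in (0,1)$ and $\kappa > 0$ explicit; Theorem \ref{tm-lya} also confirms $V_\theta \ge 1$ and, via the lower bound \eqref{pf-lya}, that $\lim_{|x|\to \infty} V_\theta(x) = \infty$. The hypotheses of the second ergodic theorem recalled in Section \ref{sec2.2} are therefore met, giving existence and uniqueness of an invariant measure $\pi_\tau^\theta$ together with the geometric convergence bound with some $r \in (0,1)$ and $b \in (0,\infty)$ depending on $\theta$, $\tau$, $L_1$, $L_2$, $L_3$.

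Because Theorem \ref{tm-lya}, Corollary \ref{cor-lya-mid}, and Proposition \ref{prop-minor} have absorbed essentially every analytical difficulty, I do not foresee a genuinely hard step; the only mild subtlety is the standard identification, for $\theta = 1/2$, of the sublevel set from Corollary \ref{cor-lya-mid} with an $m$-small set, which is routinely handled via irreducibility and the strong Feller property for an irreducible chain on $\rr^d$ (cf.\ \cite{MT09}).
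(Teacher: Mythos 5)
Your proposal is correct and follows exactly the same route as the paper's own proof: Proposition \ref{prop-minor} supplies irreducibility, the strong Feller property, and the minorization condition, after which Corollary \ref{cor-lya-mid} together with the first ergodic theorem of Section \ref{sec2.2} gives unique ergodicity for $\theta=1/2$, and Theorem \ref{tm-lya} together with the second ergodic theorem gives geometric ergodicity for $\theta\in(1/2,1]$. No gaps.
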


\begin{proof}
  We shall apply the two results we stated at the end of Subsection \ref{sec2.2}. We have shown in Proposition \ref{prop-minor} that for $\theta\in [1/2,1]$, the chain generated by the STM scheme satisfies the minorization condition and is also irreducible and strong Feller. In Corollary \ref{cor-lya-mid}, we have shown the Lyapunov structure \eqref{lya1} for $\theta=1/2$. This shows the unique ergodicity. In Theorem \ref{tm-lya}, we have shown the Lyapunov structure \eqref{lya2} for $\theta\in (1/2,1]$. This shows the geometric ergodicity.
\end{proof}

\section{Unique Ergodicity of Galerkin-based Full Discretizations for Monotone SPDEs}
\label{sec4}

In this section, we shall apply a similar methodology as in the previous section to derive the unique ergodicity of Galerkin-based full discretizations for the following SPDE:
\begin{align}\label{see-fg} 
& {\rm d} X(t, \xi)  
=(\Delta X(t, \xi)+f(X(t, \xi))) {\rm d}t
+g(X(t, \xi)) {\rm d}W(t, \xi), \tag{SPDE}
\end{align}
under (homogenous) Dirichlet boundary condition (DBC) $X(t, \xi)=0$, $(t, \xi) \in \rr_+\times \partial \OOO$, with initial value condition $X(0, \xi)=X_0(\xi)$, $\xi \in \OOO$, 
where the physical domain $\OOO \subset \rr^d$ ($d=1,2,3$) is a bounded domain with a smooth boundary $\partial \OOO$ or a convex domain with a polygonal boundary. 
Here, $f$ is assumed to be monotone-type with polynomial growth, and $g$ satisfies the usual Lipschitz condition in an infinite-dimensional setting; see Assumptions \ref{ap-f} and \ref{ap-g}, respectively.
It is clear that \eqref{see-fg} includes the following stochastic Allen--Cahn equation (SACE), arising from phase transition in materials science by stochastic perturbation, as a special case:
\begin{align} \label{ac}
{\rm d} X(t, \xi)=  \Delta X(t, \xi)  {\rm d}t + \epsilon^{-2} (X(t, \xi)-X(t, \xi)^3) {\rm d}t
+ {\rm d}W(t, \xi), \tag{SACE}
\end{align}
under DBC, where the positive index $\epsilon \ll 1$ is the interface thickness; see, e.g., \cite{BGJK23}, \cite{BJ19}, \cite{BCH19}, \cite{CH19}, \cite{GM09}, \cite{LQ20}, \cite{LQ21}, \cite{OPW23}, and references therein.

\subsection{Preliminaries and Galerkin-based Full Discretizations}
\label{sec4.1}

We first introduce some notations and main assumptions in the infinite-dimensional case.
 
Denote by $\|\cdot\|_H$ and $\<\cdot, \cdot\>_H$ the inner product and norm, respectively, in $H:=L^2(\OOO)$. We usually omit the subscript $H$ if there is no confusion about the notations used for SODEs in Section \ref{sec2}.      
For $\theta=-1$ or $1$, we use $(\dot H^\theta=\dot H^\theta(\OOO), \|\cdot\|_\theta)$ to denote the usual Sobolev interpolation spaces, respectively; the dual between $\dot H^1$ and $\dot H^{-1}$ are denoted by $_1\<\cdot, \cdot\>_{-1}$. 
  
Let ${\bf Q}$ be a self-adjoint and positive definite linear operator on $H$. 
Denote $H_0:={\bf Q}^{1/2} H$ and by
$(\LL_2^0:=HS(H_0; H), \|\cdot\|_{\LL_2^0})$ the space of Hilbert--Schmidt operators from $H_0$ to $H$.     
The driven process $W$ in Eq.~\eqref{see-fg} is an $H$-valued $\bf Q$-Wiener process on $(\Omega, \FFF, \mathbb{F}, \pp)$, which has the Karhunen--Lo\`eve expansion $W(t)=\sum_{k\in \nn} \sqrt{q_k}g_k \beta_k(t)$, $t\geq 0$. 
Here $\{g_k\}_{k\in\nn_+}$ are the eigenvectors of $\bf Q$ and form an orthonormal basis of $H$, with respect to eigenvalues $\{q_k\}_{k=1}^\infty$, and $\{\beta_k\}_{k\in\nn_+}$ are mutually independent 1-D Brownain motions in $(\Omega, \FFF, \mathbb{F}, \pp)$; see, e.g., \cite[Section 2.1]{LR15} for more details.
We only focus on trace-class noise, i.e., $\bf Q$ is a trace-class
operator or equivalently, ${\rm Tr}({\bf Q}):=\sum_{k=1}^\infty q_k<\infty$; we also refer to \cite{Liu25} for the white noise case using a different method.

Our main conditions on the coefficients of \eqref{see-fg} are the following two assumptions.

\begin{assumption} \label{ap-f}
There exist scalars $K_i \in \rr$, $i=1,2,3,4,5$, and $q \ge 1$ such that 
\begin{align} 
& (f(\xi)-f(\eta)) (\xi-\eta) \le K_1 (\xi-\eta)^2,
\quad \xi, \eta \in \rr, \label{f-mon} \\
& f(\xi) \xi \le  K_2 |\xi|^2 + K_3,
\quad \xi \in \rr,  \label{f-coe} \\
& |f'(\xi)|\le K_4 |\xi|^{q-1}+K_5,\quad \xi \in \rr. \label{con-f'}
\end{align}
\end{assumption}

Throughout, we assume that $q \ge 1$ when $d=1,2$ and $q \in [1,3]$ when $d=3$, 
so that the Sobolev embeddings $\dot H^1 \subset L^{2q}(\OOO) \subset H$ hold.
Then we can define the Nemytskii operator $F: \dot H^1 \rightarrow \dot H^{-1}$ associated with $f$ by
\begin{align} \label{df-F}
F(x)(\xi):=f(x(\xi)),\quad x \in \dot H^1,\ \xi \in \OOO.
\end{align}
It follows from the monotone condition \eqref{f-mon} and the coercive condition \eqref{f-coe} that the operator $F$ defined in \eqref{df-F} satisfies 
\begin{align} 
_1\<x-y, F(x)-F(y)\>_{-1}  \le K_1 \|x-y\|^2, & \quad x,y \in \dot H^1, \label{F-mon}  \\
_1\<x, F(x)\>_{-1} \le  K_2 \|x\|^2+K_3, & \quad x \in \dot H^1. \label{F-coe} 
\end{align}
The inequality \eqref{F-coe}, in combination with the Poincar\'e inequality that 
\begin{align} \label{poin}
\|\nabla x\|^2 \ge \lambda_1 \|x\|^2, \quad x \in \dot H^1, 
\end{align}
where $\lambda_1$ denotes the first eigenvalue of $-\Delta$ in $H$, implies that  
\begin{align} 
_1\<x, \Delta x+F(x)\>_{-1} 
& \le -(\lambda_1- K_2) \|x\|^2 +K_3, \quad x \in \dot H^1. \label{F-coe-0}
\end{align}

Denote by $G: H \rightarrow \LL_2^0$ the Nemytskii operator associated with $g$:
\begin{align} \label{df-G}
G(x) g_k(\xi):=g(x(\xi)) g_k(\xi), \quad x \in H,~ k \in \nn,~ \xi \in \OOO.
\end{align}
The following Lipschitz continuity and linear growth conditions are our main conditions on the diffusion operator $G$ defined in \eqref{df-G}.

\begin{assumption} \label{ap-g}
There exist positive constants $K_6$, $K_7$, and $K_8$ such that 
\begin{align}  
\|G(x)-G(y)\|_{\LL_2^0}^2 \le K_6 \|x-y\|^2, & \quad x, y \in H, \label{g-lip} \\
\|G(x) \|^2_{\LL_2^0} \le K_7 \|x\|^2+K_8, & \quad x \in H. \label{g-gro} 
\end{align} 
\end{assumption}

With these preliminaries, \eqref{see-fg} is equivalent to the following infinite-dimensional stochastic evolution equation:
\begin{align} \label{see}
{\rm d}X(t)=(\Delta X(t)+F(X(t))) {\rm d}t+G(X(t)) {\rm d}W, \quad  t \ge 0, \tag{SEE}
\end{align}
where the initial datum $X(0) \in H$ is assumed to vanish on the boundary $\partial \OOO$ of the physical domain throughout the present paper.
 Under the above Assumptions \ref{ap-f} and \ref{ap-g}, the authors in \cite[Theorem 4.2.4]{LR15} showed the existence and uniqueness with moments estimate of the $(\FFF_t)_{t \ge 0}$-adapted solution to Eq.~\eqref{see-fg} or the equivalent Eq.~\eqref{see}.

We also need to assume the nondegeneracy of $G$ in the following sense.

\begin{assumption} \label{ap-fg} 
For any $x \in H$, $G(x) G(x)^*$ is a positive definite operator in $\LL(H)$, i.e., 
\[
\< G(x) G(x)^* h, h\> > 0, \quad \mbox{for all non-zero $h\in H$}.
\]
\end{assumption}

\begin{remark}\label{rk-non}
\begin{enumerate}[label=(\roman*)]
\item
  Together with the assumption that $\bf Q$ is self-adjoint and positive definite we made at the beginning of Section \ref{sec4}, Assumption \ref{ap-fg} is equivalent to the positive definiteness of $[G(x){\bf Q}^{1/2}][G(x){\bf Q}^{1/2}]^*$ for any $x\in H$. 
Indeed, we observe that if the operator is of the form ``$AA^*$'', then $\< A A^* u, u\> >0$ if and only if $A^* u\neq 0$, since
$\< A A^* u, u\> = \<  A^* u, A^* u\> = \|A^* u\|^2$.
  Hence we have
  \begin{itemize}
    \item $G(x)G(x)^*$ is positive definite if and only if $G^* u \neq 0$ for all non-zero $u \in H$;
    \item $[G(x){\bf Q}^{1/2}][G(x){\bf Q}^{1/2}]^*$ is positive definite if and only if $[G(x){\bf Q}^{1/2}]^* u \neq 0$ for all non-zero $u \in H$; and moreover,
    \item $\bf Q$ is positive definite if and only if ${\bf Q}^{1/2}u \neq 0$ for all non-zero $u \in H$ (recall that if $\bf Q$ is self-adjoint and positive definite, ${\bf Q}^{1/2}$ is also self-adjoint and positive definite).
  \end{itemize}

Let $u$ be a non-zero element of $H$. It is now clear that 
${\bf Q}^{1/2} G(x)^* u \neq 0$ if and only if 
$G(x)^* u \neq 0$ 
  by the positive definiteness of $\bf Q$. This shows the equivalence between the two conditions. 
  
  \item
  (Readers may first read the notations we introduced below and then come back to the remark) Under Assumption \ref{ap-fg}, the finite-dimensional projection of the operator is also nondegenerate: for any $x \in H$, 
  \[ 
  [\PP_h G(x){\bf Q}^{1/2}][\PP_h G(x){\bf Q}^{1/2}]^*
  \] is a positive definite operator in $\LL(V_h)$. To see this, it is enough to see that $\PP_h^* u \neq 0$ for non-zero $u\in V_h$. Indeed, if $\PP_h^* u =0$ for $u\in V_h$, then $_{1}\<e_k, \PP_h^* u\>_{-1}= \<\PP_h e_k, u\> = \<e_k, u\> =0$ for all $\{e_k\}_{k=1}^K$ the orthonormal basis of $V_h$. This implies $u=0$. 
  \end{enumerate}
  \end{remark}

To introduce the Galerkin-based fully discrete scheme, let $h\in (0,1)$, $\TT_h$ be a regular family of quasi-uniform partitions of $\OOO$ with maximal length $h$, and $V_h \subset \dot H^1$ be the space of continuous functions on $\bar \OOO$ which are piecewise linear over $\TT_h$ and vanish on the boundary $\partial \OOO$.
Denote by $\Delta_h: V_h \rightarrow V_h$ and $\PP_h: \dot H^{-1} \rightarrow V_h$ be the discrete Laplacian and generalized orthogonal projection operators, respectively, defined by 
\begin{align*}  
\<v^h, \Delta_h x^h\> & =-\<\nabla x^h, \nabla v^h\>,
\quad x^h, v^h\in V_h,  \\
\<v^h, \PP_h z\> & =_1\<v^h, z\>_{-1},
\quad z \in \dot H^{-1},\ v^h\in V_h.  
\end{align*}

We discretize Eq.~\eqref{see} in time with a Drift-Implicit Euler (DIE) scheme, which can be viewed as a particular case of the STM \eqref{stm} with $\theta=1$ in infinite dimension, and in space with a Galerkin approximation.
Then the resulting fully discrete, DIE Galerkin (DIEG) scheme of \eqref{see} is to find a $V^h$-valued discrete process $\{X^h_j:\ j \in \nn\}$ such that
\begin{align}\label{die-g} \tag{DIEG} 
&X^h_{j+1}
=X^h_j+\tau \Delta_h X^h_{j+1}
+\tau \PP_h F(X^h_{j+1})
+\PP_h G(X^h_j) \delta_j W,  
\end{align} 
$j \in \nn$, starting from $X^h_0=\PP_h X_0$, which had been widely studied; see, e.g., \cite{FLZ17} and \cite{LQ21}.

\begin{remark} 
One can also consider a spectral Galerkin version of the DIEG scheme \eqref{die-g} such that all results in this section will be valid, where $\Delta_h$ and $\PP_h$ in \eqref{die-g} are replaced by spectral Galerkin approximate Laplacian operator $\Delta_N: V_N \rightarrow V_N$ and the generalized orthogonal projection operators $\PP_N: \dot H^{-1} \rightarrow V_N$, respectively: 
\begin{align*}  
\<\Delta_N u^N, v^N\> & =-\<\nabla u^N, \nabla v^N\>,
\quad u^N, v^N \in V_N,  \\
\<\PP_N u, v^N\> & =_1\<v^N, u\>_{-1},
\quad u\in V^*,\ v^N \in V_N,
\end{align*}
where $V_N$ denotes the space spanned by the first $N$-eigenvectors of the Dirichlet Laplacian operator which vanish on $\partial \OOO$ for $N \in \nn_+$. 
\end{remark}

To derive the unique solvability of the DIEG scheme \eqref{die-g}, we define $\hat F: V_h \to V_h$ by
  \begin{align} \label{hatF}
  \hat F(x) := ({\rm Id}- \tau \Delta_h) x- \tau \PP_h F(x), \quad x \in V_h. 
  \end{align}
Here and what follows, ${\rm Id}$ denotes the identity operator in various Hilbert spaces if there is no confusion.
  Then \eqref{die-g} becomes
  \begin{equation*}
      \hat F(X_{j+1}^h) = X_j^h 
      + \PP_h G(X_j^h) \delta_j W, \quad j \in \nn.
  \end{equation*}

\begin{lemma} \label{lm-hatF}
  Let Assumptions \ref{ap-f} hold with $(K_1-\lambda_1) \tau<1$. 
  Then $\hat F: V_h \to V_h$ defined in \eqref{hatF} is bijective so that the DIE scheme \eqref{die-g} can be uniquely solved and 
  $(X_{j}^h)$ is a $V_h$-valued homogenous Markov chain.
  Morevoer, $\hat F$ is an open map, i.e. for each open set $A\in \BB(V_h)$, $\hat F(A)$ is also an open set in $\BB(V_h)$.
\end{lemma}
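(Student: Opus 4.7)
The plan is to reproduce, in this Galerkin setting, the strategy of Lemma \ref{lem:well-define}: combine the monotonicity \eqref{F-mon} of $F$ with the dissipation supplied by $-\tau\Delta_h$ and the Poincar\'e inequality \eqref{poin} to show that $\hat F$ is continuous and strongly monotone on the finite-dimensional space $V_h$, and then invoke \cite[Theorem C.2]{SH96} as before to conclude bijectivity.

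Continuity of $\hat F$ is immediate: the linear part $(\mathrm{Id}-\tau\Delta_h)$ is bounded on $V_h$, and the polynomial bound \eqref{con-f'} together with the Sobolev embedding $\dot H^1\subset L^{2q}(\OOO)$ recorded after Assumption \ref{ap-f} makes $F:\dot H^1\to\dot H^{-1}$ continuous, so $\PP_h F$ is continuous as a map $V_h\to V_h$. For strong monotonicity, the defining properties of $\Delta_h$ and $\PP_h$, together with \eqref{F-mon} and \eqref{poin}, yield for all $x^h,y^h\in V_h$
\begin{align*}
\<\hat F(x^h)-\hat F(y^h),\, x^h-y^h\>
&= \|x^h-y^h\|^2 + \tau\|\nabla(x^h-y^h)\|^2\\
&\quad - \tau\,_1\<x^h-y^h,\, F(x^h)-F(y^h)\>_{-1}\\
&\ge \bigl(1-(K_1-\lambda_1)\tau\bigr)\,\|x^h-y^h\|^2,
\end{align*}
so the hypothesis $(K_1-\lambda_1)\tau<1$ makes the constant $c:=1-(K_1-\lambda_1)\tau$ strictly positive.

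Applying \cite[Theorem C.2]{SH96}, $\hat F$ is a bijection of $V_h$, hence \eqref{die-g} can be rewritten as $X_{j+1}^h=\hat F^{-1}\bigl(X_j^h+\PP_h G(X_j^h)\delta_j W\bigr)$; the homogeneous Markov property then follows exactly as in Lemma \ref{lem:well-define}, since $\delta_j W$ is independent of $\FFF_j$ and its distribution does not depend on $j$. For the open-map property, strong monotonicity also gives Lipschitz continuity of $\hat F^{-1}$: substituting $x^h=\hat F^{-1}(u^h)$ and $y^h=\hat F^{-1}(v^h)$ into the display above and using Cauchy--Schwarz yields
\begin{equation*}
\|\hat F^{-1}(u^h)-\hat F^{-1}(v^h)\|\le c^{-1}\|u^h-v^h\|,
\end{equation*}
so $\hat F$ is a homeomorphism of $V_h$ and in particular sends open sets to open sets. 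The only mildly delicate step is reconciling the $\dot H^1$--$\dot H^{-1}$ duality appearing in \eqref{F-mon} with the $H$-inner product used on $V_h$, but this is precisely what the generalized projection $\PP_h$ is defined to handle, so I do not expect any substantive obstacle.
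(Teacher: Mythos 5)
Your proposal is correct and follows essentially the same route as the paper's proof: the identical strong-monotonicity computation via \eqref{F-mon} and the Poincar\'e inequality \eqref{poin}, bijectivity from the finite-dimensional monotone operator theorem, and the open-map property from the lower bound $\|\hat F(x)-\hat F(y)\|\ge c\|x-y\|$ (the paper phrases this as the ball inclusion $B(\hat F(x),r)\subset \hat F(B(x,r/c))$, which is the same fact as Lipschitz continuity of $\hat F^{-1}$). Your explicit check of continuity of $\PP_h F$ via \eqref{con-f'} and the Sobolev embedding is a small but welcome addition that the paper leaves implicit.
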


\begin{proof}   
We have for all $x,y \in V_h$, 
  \begin{align*}
      \<x-y, \hat F(x)-\hat F(y)\> 
&= \norm{x-y}^2 + \tau  \norm{\nabla(x-y)}^2 -\tau  {}_1\<x-y,F(x)-F(y)\>_{-1} \\
      &\geq (1-K_1 \tau)\norm{x-y}^2 + \tau  \norm{\nabla(x-y)}^2 \\
      & \geq (1-K_1 \tau + \lambda_1 \tau) \norm{x-y}^2
  \end{align*}
  where we have used the Poincar\'e inequality \eqref{poin} in the last inequality. This shows the bijectivity of $\hat F$ (since $V_h$ is finite-dimensional, monotone uniform fixed point theorem holds). The homogenous Markov property is clear as the SODE case; see the proof of Lemma \ref{lem:well-define}. 

It remains to show that $\hat F$ is an open map.
From the uniform monotonicity and Cauchy--Schwarz inequality,  
we obtain 
  \begin{align}\label{eq-mon}
    \| \hat F(x) - \hat F(y)\| \geq (1-K_1 \tau + \lambda_1 \tau) \|x-y\|,   \quad \forall~x,y \in V_h.
  \end{align}
  As we explained below, $\hat F$ is an open map. From \eqref{eq-mon}, we see that
  \begin{align}\label{ball inclusion}
    B( \hat F(x), r) \subset \hat F(B(x, r/C_0)), \quad ~\forall ~r>0,
  \end{align}
  where $C_0:= 1-K_1 \tau + \lambda_1 \tau>0$. 
  Fix an open set $A\in \BB(H)$. Our target, $\hat F(A)$ being open, means that for each point $x\in V_h$, there exists an open ball $B(\hat F(x), r_0)$ with $r_0>0$ such that $B(\hat F(x), r_0)\subset \hat F(A)$. Due to the inclusion \eqref{ball inclusion}, such ball exists if $B(x, r_0/C_0)\subset A$ exists. But this is guaranteed because $A$ is open. 
\end{proof}

\begin{remark} 
  We can use the same idea as above to show the $\hat b$ defined in \eqref{hatb} in previous sections is also an open map by its uniform monotonicity, thus relieving the assumption of continuous differentiability in Assumption \ref{A3} for the proof of regularity (see the discussion above Proposition \ref{prop-minor}) of transition kernels of \eqref{stm}. However, it cannot be relieved in Proposition \ref{prop-minor} when the joint continuity of the transition probability density is needed.
\end{remark}

\subsection{Lyapunov structure of DIEG}

We have the following Lyapunov structure for the DIEG scheme \eqref{die-g}; see \cite[Lemma 3.1]{Liu24} for a similar uniform moments estimate for the temporal semi-discretization DIE scheme. 
Whether there exists a similar Lyapunov structure as that of \eqref{stm} in the SODE case for general $\theta \in [1/2, 1)$ is unknown.

 \begin{theorem} \label{tm-lya-dieg}
  Let Assumptions \ref{ap-f} and \ref{ap-g} hold with $K_2+K_7/2<\lambda_1$.
Then for any $\tau \in (0, 1)$ with $(K_1-\lambda_1) \tau<1$ and $\epsilon \in (0, \lambda_1-K_2)$, there exists a Lyapunov function $V: V_h \to [1,\infty)$ defined by 
\begin{equation}\label{lya-spde}
          V(x)=\|x\|^2 + \frac{2 \epsilon \tau}{[1 + 2 (\lambda_1-K_2-\epsilon) \tau] \lambda_1}
          \|\nabla x\|^2 +1, \quad x \in V_h, 
      \end{equation}
       such that \eqref{lya2} holds for \eqref{die-g}, with $\rho=\frac{1+K_7 \tau}{1 + 2 (\lambda_1-K_2-\epsilon) \tau} \in (0, 1)$ and $\kappa=\frac{(2K_3+K_8) \tau}{1 + 2 (\lambda_1-K_2-\epsilon) \tau} + (1-\rho)> 0$, i.e., for $n\in \nn$, 
      \begin{equation}\label{lya-die-g}
        \begin{split}
          \ee [V(X_{n+1}^h)\mid \FFF_{n}] \leq &
          \frac{1+K_7 \tau}{1 + 2 (\lambda_1-K_2-\epsilon) \tau} V(X_n^h) \\
          &\quad +  \frac{[2 (\lambda_1-K_2+K_3-\epsilon) -K_7+K_8] \tau}{1 + 2 (\lambda_1-K_2-\epsilon) \tau}.
        \end{split}
      \end{equation} 
  \end{theorem}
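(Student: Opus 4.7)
\medskip

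\noindent\textbf{Proof proposal.} The plan is to mimic the SODE argument of Theorem \ref{tm-lya}, but to replace the Ito-type polarization identity there by the discrete energy identity that comes from testing the implicit scheme against $X^h_{n+1}$ itself, the trick that is standard in the analysis of dissipative Galerkin methods for parabolic SPDEs. The novelty needed here (compared with e.g. \cite{Liu24}) is a careful splitting of the gradient term so that the Lyapunov function can incorporate $\|\nabla x\|^2$ with the precise coefficient $2\epsilon\tau/(D_\tau\lambda_1)$, where $D_\tau:=1+2(\lambda_1-K_2-\epsilon)\tau>1$.

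First I would rewrite \eqref{die-g} as $X^h_{n+1}-X^h_n=\tau\Delta_h X^h_{n+1}+\tau\PP_h F(X^h_{n+1})+\PP_h G(X^h_n)\delta_n W$, test against $2X^h_{n+1}$ in $V_h$, and apply the polarization identity $2\<a,a-b\>=\|a\|^2-\|b\|^2+\|a-b\|^2$ together with the identities $\<X^h_{n+1},\Delta_h X^h_{n+1}\>=-\|\nabla X^h_{n+1}\|^2$ and $\<X^h_{n+1},\PP_h F(X^h_{n+1})\>={}_1\<X^h_{n+1},F(X^h_{n+1})\>_{-1}$. This gives the deterministic-looking identity
\begin{equation*}
\|X^h_{n+1}\|^2+\|X^h_{n+1}-X^h_n\|^2+2\tau\|\nabla X^h_{n+1}\|^2
=\|X^h_n\|^2+2\tau\,{}_1\<X^h_{n+1},F(X^h_{n+1})\>_{-1}+2\<X^h_{n+1},\PP_h G(X^h_n)\delta_n W\>.
\end{equation*}
The crucial step is now to split $2\tau\|\nabla X^h_{n+1}\|^2=2\tau(1-\epsilon/\lambda_1)\|\nabla X^h_{n+1}\|^2+(2\epsilon\tau/\lambda_1)\|\nabla X^h_{n+1}\|^2$, bound the first part from below by $2\tau(\lambda_1-\epsilon)\|X^h_{n+1}\|^2$ via the Poincar\'e inequality \eqref{poin}, and combine with the coercivity estimate \eqref{F-coe} to produce exactly $-2\tau(\lambda_1-K_2-\epsilon)\|X^h_{n+1}\|^2-(2\epsilon\tau/\lambda_1)\|\nabla X^h_{n+1}\|^2+2\tau K_3$.

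Next I would handle the stochastic term in the classical implicit way: decompose $X^h_{n+1}=X^h_n+(X^h_{n+1}-X^h_n)$ so that the $X^h_n$ contribution is a conditional martingale increment, and absorb the remainder into the $\|X^h_{n+1}-X^h_n\|^2$ term sitting on the left via Young's inequality. After taking $\ee[\cdot\mid\FFF_n]$, using $\ee[\|\PP_h G(X^h_n)\delta_n W\|^2\mid\FFF_n]\le\|G(X^h_n)\|_{\LL_2^0}^2\tau\le(K_7\|X^h_n\|^2+K_8)\tau$ from \eqref{g-gro}, and cancelling the $\|X^h_{n+1}-X^h_n\|^2$ contributions on both sides, one arrives at
\begin{equation*}
D_\tau\,\ee[\|X^h_{n+1}\|^2\mid\FFF_n]+\tfrac{2\epsilon\tau}{\lambda_1}\ee[\|\nabla X^h_{n+1}\|^2\mid\FFF_n]\le(1+K_7\tau)\|X^h_n\|^2+(2K_3+K_8)\tau.
\end{equation*}
Dividing by $D_\tau$ recovers $\ee[V(X^h_{n+1})\mid\FFF_n]-1\le\tfrac{1+K_7\tau}{D_\tau}\|X^h_n\|^2+\tfrac{(2K_3+K_8)\tau}{D_\tau}$, and since the right-hand side is trivially bounded by $\tfrac{1+K_7\tau}{D_\tau}V(X^h_n)$ plus the constant contribution, one identifies $\rho=(1+K_7\tau)/D_\tau\in(0,1)$ (strict inequality uses the assumption $K_2+K_7/2<\lambda_1$ and $\epsilon<\lambda_1-K_2$) and $\kappa=(1-\rho)+(2K_3+K_8)\tau/D_\tau$, which simplifies to the stated expression.

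The main obstacle I expect is choosing the Poincar\'e splitting parameter correctly so that the coefficient of $\|\nabla X^h_{n+1}\|^2$ on the left exactly matches the one hidden inside $V$ after the division by $D_\tau$; getting this constant right is what forces the particular coefficient $2\epsilon\tau/(D_\tau\lambda_1)$ in the definition \eqref{lya-spde}. All other ingredients (well-posedness of the implicit step, measurability, and $\lim_{\|x\|\to\infty}V(x)=\infty$) follow directly from Lemma \ref{lm-hatF} and the trivial bound $V(x)\ge\|x\|^2+1$.
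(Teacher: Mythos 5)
Your proposal is correct and follows essentially the same route as the paper: testing the implicit scheme against $X^h_{n+1}$, applying the polarization identity $2\<a,a-b\>=\|a\|^2-\|b\|^2+\|a-b\|^2$, splitting the gradient term via Poincar\'e to retain the $(2\epsilon\tau/\lambda_1)\|\nabla X^h_{n+1}\|^2$ piece, absorbing the noise cross term into $\|X^h_{n+1}-X^h_n\|^2$ by Young's inequality, and concluding with the conditional It\^o isometry and \eqref{g-gro} before dividing by $D_\tau=1+2(\lambda_1-K_2-\epsilon)\tau$. The paper's proof is precisely this computation, so no further comparison is needed.
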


\begin{proof}  
For simplicity, set $F_k=F(X_k^h)$ and $G_k=G(X_k^h)$ for $k \in \nn$.
Testing \eqref{die-g} using $\<\cdot, \cdot\>$-inner product with $X_{k+1}^h$, using the elementary equality 
\begin{align*}
2 \<x-y, x\> =\|x \|^2- \|y\|^2+\|x-y\|^2,
\quad x, y \in H,
\end{align*}  
and integration by parts formula, we have  
\begin{align} \label{sta-uk0}
& \|X_{k+1}^h\|^2 - \|X_k^h\|^2 +  \|X_{k+1}^h-X_k^h\|^2
+ \frac{2 \varepsilon}{\lambda_1}  \|\nabla X_{k+1}^h\|^2 \tau \nonumber \\
& = - 2(1-\frac{\varepsilon}{\lambda_1}) \|\nabla X_{k+1}^h\|^2 \tau +2 ~ _1\<X_{k+1}^h, F_{k+1}\>_{-1} \tau \nonumber \\
& \quad + 2 \<X_{k+1}^h-X_k^h, G_k \delta_k W\>
+ 2 \<X_k^h, G_k \delta_k W\>,
\end{align}   
for any positive $\varepsilon$ with $\varepsilon<\lambda_1$; here and in the rest of the paper, $\varepsilon$ denotes an arbitrarily small positive constant which would differ in each appearance.
Using \eqref{F-coe}, \eqref{poin}, and the elementary inequality $2\<a,b\> \leq \|a\|^2+\|b\|^2$ that holds in general inner product spaces, we have
\begin{align*} 
&  \|X_{k+1}^h\|^2 - \|X_k^h\|^2  
+ \frac{2 \varepsilon}{\lambda_1} \|\nabla X_{k+1}^h\|^2 \tau  \\
& \le - 2 (\lambda_1-K_2- \varepsilon) \|X_{k+1}^h\|^2 \tau 
+ \|G_k \delta_k W\|^2 + 2\<X_k^h, G_k \delta_k W\>+ 2 K_3 \tau. 
\end{align*}  
It follows that
\begin{align*}  
& (1+2 (\lambda_1-K_2-\varepsilon) \tau) \|X_{k+1}^h\|^2 
+ \frac{2 \varepsilon}{\lambda_1} \|\nabla X_{k+1}^h\|^2 \tau \\
& \le \|X_k^h\|^2+\|G_k \delta_k W\|^2 
+ 2\<X_k^h, G_k \delta_k W\>+2 K_3 \tau.
\end{align*}  

Taking the conditional expectation $\ee [\cdot\mid \FFF_n]$ on both sides, noting the fact that both $X_k^h$ and $G_k$ are independent of  $\delta_k W$, using It\^o isometry and \eqref{g-gro}, we get
\begin{align*}  
& (1+2 (\lambda_1-K_2-\varepsilon) \tau) \ee [\|X_{k+1}^h\|^2 \mid \FFF_n] 
+ \frac{2 \varepsilon}{\lambda_1}\tau \ee [\|\nabla X_{k+1}^h\|^2 \mid \FFF_n]  \\
& \le (1+K_7 \tau)  \|X_k^h\|^2 + (2 K_3+K_8) \tau,
\end{align*} 
from which we obtain \eqref{lya-die-g}.  
\end{proof}

  \begin{theorem} \label{tm-dieg}
Let Assumptions \ref{ap-f}, \ref{ap-g}, and \ref{ap-fg} hold with $K_2<\lambda_1$.
Then the DIEG scheme \eqref{die-g} is uniquely ergodic for any $h \in (0, 1)$ and $\tau \in (0, 1)$ with $(K_1- \lambda_1) \tau<1$.
  \end{theorem}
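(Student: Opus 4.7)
The plan is to transplant the three-step strategy used in Theorem \ref{tm-erg} to the finite-dimensional space $V_h$ carrying the DIEG chain: (i) extract existence of an invariant measure from the Lyapunov structure already proved in Theorem \ref{tm-lya-dieg}; (ii) establish regularity of the transition kernel $P$ by constructing a strictly positive, jointly continuous transition density, mirroring Proposition \ref{prop-minor}; and (iii) conclude uniqueness via the third ergodic theorem recorded in Section \ref{sec2.2}.

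For step (i), Theorem \ref{tm-lya-dieg} supplies a Lyapunov function $V$ on $V_h$ satisfying the geometric drift condition \eqref{lya2} with $\rho \in (0,1)$ and $\kappa > 0$. The Poincar\'e inequality \eqref{poin} makes $\|\cdot\|$ and $\|\nabla \cdot\|$ comparable on $V_h$, so $V$ is coercive with compact sublevel sets, and as noted in Section \ref{sec2.2} the drift \eqref{lya2} already guarantees the existence of at least one invariant probability measure for $(X_n^h)_{n \in \nn}$.

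For step (ii), I would rewrite \eqref{die-g} as $\hat F(X_{n+1}^h) = X_n^h + \PP_h G(X_n^h)\, \delta_n W$ with $\hat F$ from \eqref{hatF}, so that for $x \in V_h$ and $A \in \BB(V_h)$,
\begin{align*}
P(x, A) = \mu_{x,\, Q_h(x)\tau}\bigl(\hat F(A)\bigr), \qquad Q_h(x) := \bigl[\PP_h G(x)\mathbf{Q}^{1/2}\bigr]\bigl[\PP_h G(x)\mathbf{Q}^{1/2}\bigr]^*.
\end{align*}
By Remark \ref{rk-non}(ii), Assumption \ref{ap-fg} forces $Q_h(x) \in \LL(V_h)$ to be positive definite, so $\mu_{x, Q_h(x)\tau}$ is a nondegenerate Gaussian measure on the finite-dimensional $V_h$ and is equivalent to Lebesgue measure there. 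Assumption \ref{ap-f}(3) implies $F$, and hence $\hat F$, is continuously differentiable on $V_h$, while the uniform monotonicity estimate inside the proof of Lemma \ref{lm-hatF} rules out a zero eigenvalue of $J_{\hat F}(y)$ by the exact contradiction argument used in Proposition \ref{prop-minor}. Since Lemma \ref{lm-hatF} also shows $\hat F$ is bijective, a change of variables yields the density
\begin{align*}
p(x, y) = \frac{\exp\bigl\{-\tfrac{1}{2}(\hat F(y) - x)^{\tr} [Q_h(x)\tau]^{-1}(\hat F(y) - x)\bigr\}}{\sqrt{(2\pi)^{\dim V_h}\, \det(Q_h(x)\tau)}}\, \bigl|\det J_{\hat F}(y)\bigr|
\end{align*}
for $P(x,\cdot)$ with respect to Lebesgue measure on $V_h$. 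Joint continuity follows from the interchange-of-limits argument at the end of Proposition \ref{prop-minor}, and $p(x,y) > 0$ everywhere implies that the family $\{P(x,\cdot) : x \in V_h\}$ is mutually equivalent, i.e., $P$ is regular.

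Step (iii) is then immediate: the third ergodic theorem of Section \ref{sec2.2} upgrades the invariant measure of step (i) to the unique invariant measure under the regularity obtained in step (ii). The main obstacle is the nondegeneracy check in step (ii): one must carefully track how positivity of $\mathbf Q$ and of $G(x)G(x)^*$ on the infinite-dimensional $H$ propagates through the orthogonal projection $\PP_h$ to yield positive definiteness of $Q_h(x)$ on the finite-dimensional subspace $V_h$, and this is precisely what Remark \ref{rk-non}(ii) delivers. Once this point is settled, the remainder is a routine finite-dimensional adaptation of Proposition \ref{prop-minor}; in fact, by combining the strict positivity of $p$ with the drift \eqref{lya-die-g} and the second ergodic theorem one could upgrade the conclusion to geometric ergodicity, but unique ergodicity suffices for the statement at hand.
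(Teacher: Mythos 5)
Your proposal is correct and follows essentially the same route as the paper: Lyapunov drift from Theorem \ref{tm-lya-dieg} for existence, nondegeneracy of $[\PP_h G(x)\mathbf{Q}^{1/2}][\PP_h G(x)\mathbf{Q}^{1/2}]^*$ via Remark \ref{rk-non}(ii) for regularity of $P^h$, and the third ergodic theorem for uniqueness. The only difference is that you establish regularity by building an explicit jointly continuous density as in Proposition \ref{prop-minor} (which needs continuous differentiability and a nonvanishing Jacobian of $\hat F$), whereas the paper gets the same conclusion more cheaply from the mutual equivalence of the nondegenerate finite-dimensional Gaussian measures together with the bijectivity and open-map property of $\hat F$ from Lemma \ref{lm-hatF}, with no differentiability required.
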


  \begin{proof}  
  With the help of the Lyapunov structure derived in Theorem \ref{tm-lya-dieg}, it suffices to show the regularity of the transition kernel $P^h$ associated with the DIEG scheme \eqref{die-g} defined as 
  \begin{align*}
  P^h(x,A)=\pp(X_{n+1}^h \in A\mid X_n^h=x), \quad x \in V_h, ~ A \in \BB(V_h).
  \end{align*}

By the Markov property of \eqref{die-g} (similar to what we did in \eqref{pxa}) and calculations of mean and covariance, we have 
\begin{align} \label{pxa+}
    P^h(x, A) 
    = \mu_{x, [\PP_h G(x) {{\bf Q}^{1/2}}] [\PP_h G(x) {{\bf Q}^{1/2}}]^* \tau}(\hat F(A)),  \quad x \in V_h, ~ A \in \BB(V_h), 
\end{align} 
as $x + \PP_h G(x) \delta_n W \sim \mathcal N(x, [\PP_h G(x) {{\bf Q}^{1/2}}] [\PP_h G(x) {{\bf Q}^{1/2}}]^* \tau)$, where $\hat F$ is defined in $\eqref{hatF}$.  
As noted in Remark \ref{rk-non}(ii), $[\PP_h G(x) {{\bf Q}^{1/2}}] [\PP_h G(x) {{\bf Q}^{1/2}}]^* \tau$ is nondegenerate, so that the family of Gaussian measures $\{\mu_{x, [\PP_h G(x) {{\bf Q}^{1/2}}] [\PP_h G(x) {{\bf Q}^{1/2}}]^* \tau}: x\in V_h\}$ are all equivalent by applying Feldman--Hajek theorem in finite dimensional space $V_h$.
This shows that $P^h$ is regular.
  \end{proof}

 \begin{remark}
One can show that the estimate \eqref{lya-die-g} holds for the temporal DIE scheme with the same function $V$ defined in \eqref{lya-spde}, which is indeed a Lyapunov function in $H$ by the compact embedding $\dot H^1 \subset H$.
However, the regularity or the strong Feller property in the infinite-dimensional case is unknown, so one cannot conclude the unique ergodicity of the temporal semi-discrete DIE scheme.  
 \end{remark}

Applying the above result in Theorem \ref{tm-dieg}, we have the following unique ergodicity of the DIEG scheme \eqref{die-g} applied to the \eqref{ac}.

  \begin{theorem} \label{tm-ac}
  For any $\epsilon>0$, $h \in (0, 1)$ and $\tau \in (0, 1)$ with $(\epsilon^{-2}-\lambda_1) \tau<1$, the numerical schemes \eqref{die-g} and \eqref{die-g} for \eqref{ac} are both uniquely ergodic. 
  \end{theorem}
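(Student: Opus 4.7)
The plan is to verify that the coefficients of \eqref{ac} satisfy all the hypotheses of Theorem \ref{tm-dieg}, after which the conclusion is immediate. Identifying the drift as $f(\xi) = \epsilon^{-2}(\xi - \xi^3)$ and the diffusion as $g \equiv 1$ (purely additive trace-class noise), I would check Assumptions \ref{ap-f}, \ref{ap-g}, and \ref{ap-fg} in turn.

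For Assumption \ref{ap-f}, the monotonicity \eqref{f-mon} follows from $(\xi^3-\eta^3)(\xi-\eta) = (\xi-\eta)^2(\xi^2+\xi\eta+\eta^2) \ge 0$, which yields $(f(\xi)-f(\eta))(\xi-\eta) \le \epsilon^{-2}(\xi-\eta)^2$ and hence $K_1 = \epsilon^{-2}$; the polynomial growth \eqref{con-f'} holds with $q = 3$. The decisive step is the coercive condition \eqref{f-coe}: instead of absorbing the stabilizing cubic term into the quadratic one, I would exploit the pointwise bound
\begin{equation*}
\xi^2 - \xi^4 \le 1/4, \quad \xi \in \rr,
\end{equation*}
whose maximum is attained at $\xi = \pm 1/\sqrt{2}$. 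This gives $f(\xi)\xi \le \epsilon^{-2}/4$, so one may take $K_2 = 0$ and $K_3 = \epsilon^{-2}/4$. This is the key observation: choosing $K_2 = 0$ ensures that the hypothesis $K_2 < \lambda_1$ of Theorem \ref{tm-dieg} holds \emph{uniformly} in the interface thickness, so that the conclusion will be valid for every $\epsilon > 0$.

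For Assumption \ref{ap-g}, since $G(x) \equiv {\rm Id}$ (viewed as the embedding $H_0 \hookrightarrow H$), an orthonormal basis of $H_0$ is $\{\sqrt{q_k}\, g_k\}_{k \in \nn_+}$, so that
\begin{equation*}
\|G(x)\|_{\LL_2^0}^2 = \sum_{k \in \nn_+} \|\sqrt{q_k}\, g_k\|^2 = \sum_{k \in \nn_+} q_k = {\rm Tr}({\bf Q}) < \infty,
\end{equation*}
finite by the trace-class assumption on ${\bf Q}$; hence one may take $K_6 = K_7 = 0$ and $K_8 = {\rm Tr}({\bf Q})$. Assumption \ref{ap-fg} is trivial, as $G(x) G(x)^* = {\rm Id}$ is positive definite.

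With these choices, $K_1 = \epsilon^{-2}$ and $K_2 = 0 < \lambda_1$, while the stepsize condition $(K_1 - \lambda_1)\tau < 1$ is exactly the assumed $(\epsilon^{-2} - \lambda_1)\tau < 1$. Invoking Theorem \ref{tm-dieg} delivers the unique ergodicity of the DIEG scheme \eqref{die-g}, and its spectral Galerkin counterpart (the other reference intended in the statement) is handled analogously via the remark following \eqref{die-g}. There is no real technical obstacle here; the substantive point is that the double-well structure of the Allen--Cahn nonlinearity furnishes a uniform-in-$\xi$ bound on $f(\xi)\xi$, which is precisely what allows the weak dissipativity condition $K_2 < \lambda_1$ to survive for arbitrarily small interface thickness $\epsilon$.
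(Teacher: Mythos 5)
Your proposal is correct and follows essentially the same route as the paper: verify Assumptions \ref{ap-f}, \ref{ap-g}, and \ref{ap-fg} for $f(\xi)=\epsilon^{-2}(\xi-\xi^3)$ and $G\equiv{\rm Id}$ and invoke Theorem \ref{tm-dieg}. The only (immaterial) difference is in the coercivity constant: you take $K_2=0$, $K_3=\epsilon^{-2}/4$ from the pointwise maximum of $\xi^2-\xi^4$, whereas the paper uses Young's inequality to make $K_2$ an arbitrary negative scalar --- both give $K_2<\lambda_1$ uniformly in $\epsilon$, which is the decisive point.
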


  \begin{proof}  
  We just need to check the conditions in Assumptions \ref{ap-f}, \ref{ap-g}, and \ref{ap-fg} hold with $K_2<\lambda_1$ in the setting of \eqref{ac} with $q=3$, $f(x)=\epsilon^{-2}(x-x^3)$, $x \in \rr$, and $G(x)={\rm Id}$ for all $x \in H$.
  
  As $G(x)={\rm Id}$ for all $x \in H$, $G(x) G(x)^*= {\rm Id}$ is positive definite so that Assumption \ref{ap-fg} holds.
  The validity of \eqref{f-mon} and \eqref{con-f'} in Assumption \ref{ap-f} and \eqref{g-lip} and \eqref{g-gro} in Assumption \ref{ap-g} in the setting of Eq.~\eqref{ac} were shown in \cite[Section 4]{Liu24}: $K_1=\epsilon^{-2}$, $K_4=2\epsilon^{-2}$ and $K_5=\epsilon^{-2}$, $K_6=K_7=0$, $K_8={\rm Trace}({\bf Q})$, the trace of ${\bf Q}$.
  It remains to show \eqref{f-coe} with $K_2<\lambda_1$ and some $K_3>0$.
  Indeed, by Young inequality,
\begin{align*}
\epsilon^{-2}(x-x^3)x
=-\epsilon^{-2} x^4+\epsilon^{-2}x^2
\le -C x^2+C_\epsilon,
\end{align*}     
for any positive $C$ and certain positive constant $C_\epsilon$.
So one can take $K_2$ as any negative scalar and thus \eqref{f-coe} with $K_2<\lambda_1$ and some $K_3>0$.
  \end{proof}

\section{Numerical Experiments}
\label{sec5}

  In this section, we perform two numerical experiments to verify our theoretical results Theorem \ref{tm-erg} in Section \ref{sec3} and Theorem \ref{tm-ac} in Section \ref{sec4}, respectively.

  \subsection{Experiments on STM}
The first numerical test is given to the Eq.~\eqref{sde} with $b(x)=x-x^3$ and $\sigma(x)=\sqrt{x^2+1}$.
Assumptions \ref{A1}, \ref{A2}, and \ref{A3} have been verified in Example \ref{ex1} with $L_1=3$, $L_2=3$, and $L_3=1$ and thus \eqref{stm} is uniquely ergodic for any $\theta\in [1/2,1]$ and $\tau \in (0, 1)$ (so that $L_1 \theta \tau<2$), according to Theorem \ref{tm-erg}. 
The proposed scheme, which is implicit, is numerically solved by utilizing \verb|scipy.optimize.fsolve|, a wrapper around MINPACK's hybrd and hybrj algorithms.
In addition, we take $\tau=0.1$ and choose $\theta=1/2, 3/4, 1$ and initial data $X_0=-5, 5, 15$ to implement the numerical experiments.
 
It is clear from Figure \ref{fig-sde} that the shapes of the empirical density functions plotted by kernel density estimation at $n=5000$ corresponding to $t=500$ are much more similar with the same $\theta$ and different initial data, which indicates the unique ergodicity of \eqref{stm} and thus verifies the theoretical result in Theorem \ref{tm-erg}. 
Indeed, Figure \ref{fig-sde} indicates the strong mixing property (which yields the unique ergodicity) of \eqref{stm}; this verifies the convergence result in Theorem \ref{tm-erg} with $\theta \in (1/2, 1]$.  
Moreover, the limiting measures for different $\theta$ are pretty close, which indicates the uniqueness of the limiting invariant measure for \eqref{stm} even with different $\theta$.
Indeed, these invariant measures are all acceptable approximations of the limiting invariant measure, which is the unique invariant measure of Eq.~\eqref{sde}.
The quantitative convergence analysis between the exact and numerical ergodic measures has been studied in a separate paper by the first author in collaboration with Dr. Xiaoming Wu.

  \begin{figure}[h]
    \centering 
    \includegraphics[width=1\textwidth]{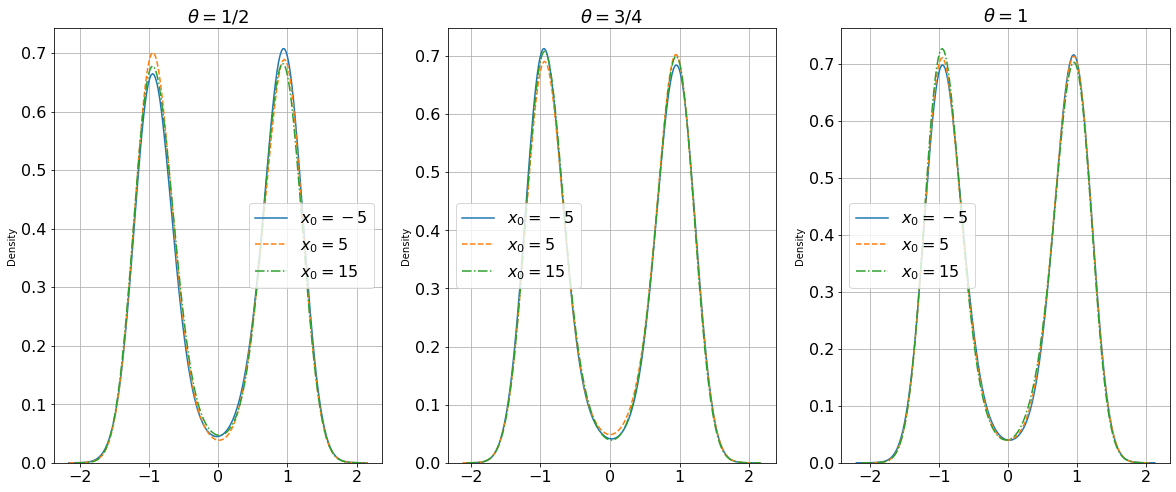}
    \caption{Empirical density of \eqref{stm} for \eqref{sde}}\label{fig-sde}
  \end{figure}
   
\subsection{Experiments on DIEG}
The second numerical test is given to \eqref{ac} in $\OOO=(0, 1)$ with $\epsilon=0.5$.
By Theorem \ref{tm-ac}, \eqref{die-g}, as well as the spectral Galerkin discretization of \eqref{die-g}, is uniquely ergodic for any $\tau \in (0, (4-\pi^2/(\pi^2+1))^{-1})$ (with $(\epsilon^{-2}-\frac{\lambda_1}{\lambda_1+1}) \tau<1$). 
We take $\tau=0.1$ and $N=10$ (the dimension of the spectral Galerkin approximate space), choose $\theta=1/2, 3/4, 1$ and initial data $X_0(\xi)=0, \sin \pi \xi, \sum_{k=1}^{10} \sin k \pi \xi$, $\xi \in (0, 1)$, and approximate the expectation by taking averaged value over $1,000$ paths to implement the numerical experiments.
In addition, we simulate the time averages $\frac{1}{2,000}\sum_{n=1}^{2,000} \ee [\phi(X_n)]$ (up to $n=2,000$ corresponding to $t=200$) by 
  \begin{align}
    \frac{1}{2,000,000}\sum_{n=1}^{2,000} \sum_{k=1}^{1,000} \phi(X_n^k),
  \end{align}
  where $X_n^k$ denotes $n$-th iteration of $k$-th sample path and the test function $\phi$ are chosen to be $\phi(\cdot)=e^{-\|\cdot\|^2}, \sin \|\cdot\|^2, \|\cdot\|^2$, respectively.

  \begin{figure}[h]
    \centering
    \includegraphics[width=\textwidth]{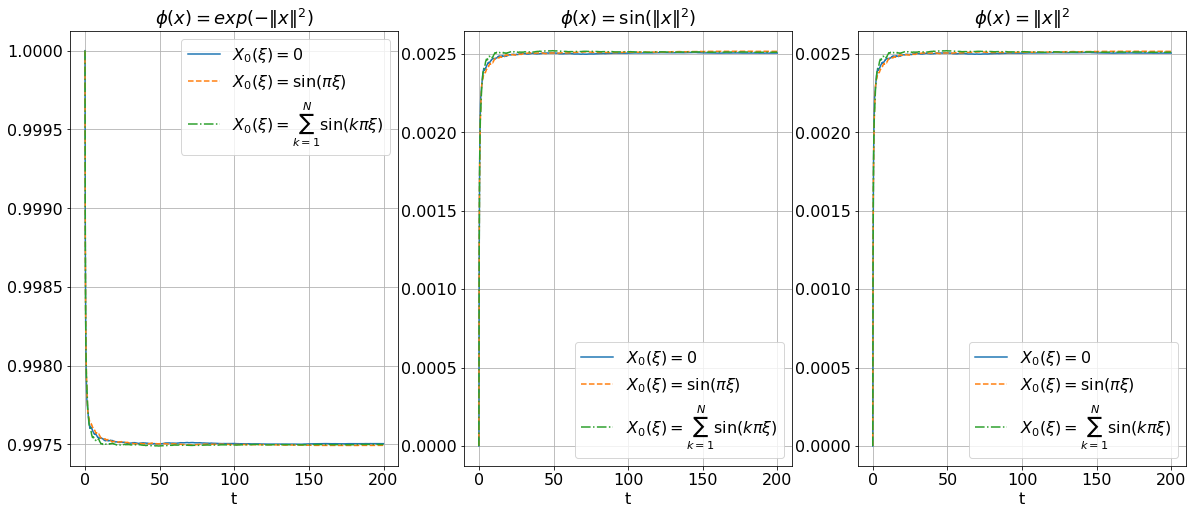}
    \caption{Time averages of \eqref{die-g} for \eqref{ac}}\label{fig-ac}
  \end{figure}

From Figure \ref{fig-ac}, the time averages of \eqref{die-g} with different initial data converge to the same ergodic limit, which verifies the theoretical result in Theorem \ref{tm-ac}.

\section{Discussions}
\label{sec6}

The theoretical loss of geometric ergodicity of \eqref{stm} with $\theta=1/2$ (in Theorem \ref{tm-erg}) is mainly because we can not show the stronger Lyapunov condition \eqref{lya+} at this stage.
However, Figure \ref{fig-sde} indicates that \eqref{stm} with $\theta=1/2$ is also strongly mixing.
This motivates our conjecture that when $\theta=1/2$, or even for $\theta\in (0,1/2]$, \eqref{lya2} also holds with some $\rho\in (0,1)$ and $\kappa\geq 0$ so that it is also geometrically  ergodic. 
The conjecture will be investigated in future research.

\section*{Declarations}

{\bf Conflict of interest}. The authors have no competing interests to declare relevant to this article's content.
The data are available from the corresponding author on reasonable request.

\section*{Acknowledgements}

We thank the anonymous referees for their helpful comments and suggestions.
The first author is supported by the National Natural Science Foundation of China, No. 12101296, Basic and Applied Basic Research Foundation of Guangdong Province, No. 2024A1515012348, and Shenzhen Basic Research Special Project (Natural Science Foundation) Basic Research (General Project), Nos. JCYJ20220530112814033 and JCYJ20240813094919026.

  \bibliographystyle{amsplain}
  \bibliography{bib.bib}

\end{document}